\theoremstyle{plain}
\newtheorem{theorem}{Theorem}[section]
\newtheorem{proposition}[theorem]{Proposition}
\newtheorem{cor}[theorem]{Corollary}
\newtheorem{prop}[theorem]{Proposition}
\newtheorem{lemma}[theorem]{Lemma}
\newtheorem{definition}[theorem]{Definition}
\newtheorem{property}[theorem]{Property}
\theoremstyle{definition}
\newtheorem{rmk}[theorem]{Remark}
\numberwithin{equation}{section}
\newcommand{\C}{\mathbb{C}}
\newcommand{\R}{\mathbb{R}}
\newcommand{\SO}{\operatorname{SO}}
\newcommand{\SL}{\operatorname{SL}}
\newcommand{\Ad}{\operatorname{Ad}}
\newcommand{\diag}{\operatorname{diag}}
\def\af{\mathfrak{a}}
\def\gf{\mathfrak{g}}
\def\hf{\mathfrak{h}}
\def\kf{\mathfrak{k}}
\def\pf{\mathfrak{p}}
\def\qf{\mathfrak{q}}
\def\sf{\mathfrak{s}}
\def\so{\mathfrak{so}}
\def\1{{\bf1}}
\begin{document}
\title[decomposition theorems]
{Decomposition theorems for triple spaces}
\subjclass[2000]{22F30, 22E46}
\keywords{triple space, polar decomposition, spherical}
\date{December 20, 2012}
\author[Danielsen, Kr\"{o}tz, Schlichtkrull]
{Thomas Danielsen$^1$, Bernhard Kr\"{o}tz$^2$ \and \\ Henrik Schlichtkrull$^1$}
\thanks{$^1$Department of Mathematical Sciences, University of Copenhagen, Denmark}
\thanks{$^2$Department of Mathematics, University of Paderborn, Germany}
\email{thd@math.ku.dk}
\email{bkroetz@math.uni-paderborn.de}
\email{schlicht@math.ku.dk}
\begin{abstract}
A triple space is a homogeneous space $G/H$ where 
$G=G_0\times G_0\times G_0$ is a threefold product
group and $H\simeq G_0$ the diagonal subgroup of $G$.
This paper concerns the geometry of the triple 
spaces with $G_0=\SL(2,\R)$,
$\SL(2,\C)$ or $\SO_e(n,1)$ for $n\ge 2$. 
We determine the abelian subgroups $A\subset G$
for which there is a polar decomposition $G=KAH$,
and we determine for which minimal parabolic subgroups 
$P\subset G$, the orbit $PH$ is open in $G/H$.
\end{abstract}
\maketitle

\section{Introduction}

Let $G_0$ be a real reductive group and let $G= G_0 \times G_0\times G_0$ and 
$H=\diag (G_0)$.  The corresponding homogeneous space 
$G/H$ is called a {\it triple space}. Triple spaces are 
examples of non-symmetric homogeneous spaces, as there is no involution
of $G$ with fixed point group $H$. It is interesting in the non-symmetric
setting to explore properties, which play an important role for 
the harmonic analysis of symmetric spaces. In this paper we 
examine the geometric structure of some triple spaces from this
point of view.

One important structural result for symmetric spaces is the polar
decomposition $G=KAH$. Here $K\subset G$ is a maximal compact
subgroup, and $A\subset G$ is abelian. Polar decomposition 
for a Riemannian symmetric space $G/K$ is due to Cartan, 
and it was generalized to reductive symmetric spaces 
in the form $G=KAH$ by Flensted-Jensen~\cite{FJ}. 

For triple spaces in general, the sum of the dimensions of
$K$, $A$ and $H$ can be strictly smaller than the dimension
of $G$, which obviously prevents $G=KAH$.
Here we are interested in the triple spaces with
\begin{equation}\label{G0}
G_0=\SL(2,\R), \,\SL(2,\C), \,\SO_e(n,1)\quad(n=2,3,\dots)
\end{equation}
for which there is no obstruction by dimensions.
In Theorem \ref{thm polar} we show that indeed these spaces admit
a polar decomposition as above, and we determine
precisely for which maximal split abelian subgroups $A$ 
the decomposition is valid. For the simplest choice of 
group $A$ we describe the indeterminateness of the $A$-component
for a given element in $G$, and we identify the invariant
measure on $G/H$ in these coordinates.

Another important structural result for a Riemannian symmetric 
space $G/K$ is the fact (closely related to Iwasawa decomposition)
that minimal parabolic subgroups $P$ act transitively. For 
non-Riemannian symmetric spaces there is no transitive action of $P$,
but it is an important result, due to Wolf \cite{Wolf}, that $P$ has an orbit 
on $G/H$ which is open. In Proposition \ref{thm sph} we verify
that this is the case also for the spaces in (\ref{G0}), and
we determine precisely for which minimal parabolic subgroups $P$
the orbit through the origin is open.

By combining these results we conclude in
Corollary \ref{strong spherical} that 
there exist maximal split abelian 
subgroups $A$ for which $G=KAH$ and for which
$PH$ is open for all minimal parabolic subgroups $P$ with
$P\supset A$, a property which plays an important role
in \cite{KSS}.

An interesting observation (which surprised us) is that 
in some cases there are also maximal split abelian 
subgroups $A$ for which 
$PH$ is open for all minimal parabolic subgroups $P$ with
$P\subset A$, but for which the polar decomposition fails
(see Remark \ref{surprise}). 

The fact that the triple space of $\SL(2,\C)$ 
admits open $P$-orbits follows from 
\cite{Kraemer} p.~152. A homogeneous space 
of algebraic groups over $\C$ with an
open Borel orbit is said to be spherical,
cf~\cite{Brion}, and the spaces we consider may be seen
as prototypes of spherical spaces over $\R$. 

In a final section we introduce an infinitesimal version
of the polar decomposition, and show that in the current 
setting it is valid if and only if the global
polar decomposition $G=KAH$ is valid.

The harmonic analysis on $\SL(2,\R)$ is an essential example
for understanding the harmonic analysis on general reductive
groups. We expect the triple spaces considered here
to serve similarly for the
harmonic analysis on non-symmetric homogeneous spaces, 
which is yet to be developed.

\section{Notation}

Let $\gf_0=\kf_0\oplus\sf_0$ be a Cartan decomposition of 
the Lie algebra $\gf_0$ of $G_0$,
and put $$\kf=\kf_0\times\kf_0\times\kf_0, \quad
\sf=\sf_0\times\sf_0\times\sf_0,$$ then $\gf=\kf\oplus\sf$ is 
also a Cartan decomposition.
The maximal abelian subspaces of $\sf$ have the form
\begin{equation}\label{three a's}
\af=\af_1\times\af_2\times\af_3
\end{equation}
with three maximal abelian subspaces $\af_1,\af_2,\af_3$ in $\sf_0$.

If for each $j$ we let $A_j=\exp\af_j$ and choose a positive
system for the roots of $\af_j$, then with
$G_0=K_0A_jN_j$ for $j=1,2,3$ we obtain the Iwasawa decomposition
$G=KAN$
where 
$$K=K_0\times K_0\times K_0,\quad A=A_1\times A_2\times A_3,\quad 
N=N_1\times N_2\times N_3.$$
Likewise we obtain the minimal parabolic subgroup 
$$P=P_1\times P_2\times P_3 =MAN$$
where $M=M_1\times M_2\times M_3$ and each $P_j=M_j A_jN_j$ is
a minimal parabolic subgroup of $G_0$.

\section{Polar decomposition}

Let $G/H$ be a homogeneous space of a reductive group
$G$, and let $\gf=\kf\oplus \sf$ be a Cartan decomposition
of the Lie algebra of $G$.
A decomposition of $G$ of the form
\begin{equation}\label{polar dec}
G=KAH,
\end{equation}
with $A=\exp\af$, for an
abelian subspace $\af\subset\sf$, 
is said to be a {\it polar} decomposition.
If such a decomposition exists, then the homogeneous
space $G/H$ is said to be of {\it polar type}
(see \cite{KSS}). 

The fact that symmetric spaces are of polar type
implies in particular that every double space 
$G/H=(G_0\times G_0)/\diag(G_0)$ with $G_0$ a real reductive group 
admits a polar decomposition. Here we can take 
$$\af=\af_0\times\af_0$$
for a maximal abelian subspace $\af_0\subset\sf_0$
(in fact, it would suffice to take already
the antidiagonal of $\af_0\times\af_0$).
Then $A$ has the form $A_1\times A_2$ with $A_1=A_2$.
In contrast, triple spaces do not admit $G=KAH$
for $A=A_1\times A_2\times A_3$ if $A_1=A_2=A_3$:

\begin{lemma}\label{not KAH}
Let $G/H$ be the triple space of a non-compact
semisimple Lie group $G_0$. Let $\af_0\subset\sf_0$ 
be maximal abelian and let $A=A_0\times A_0\times A_0$.
Then $KAH$ is a proper subset of $G$.
\end{lemma}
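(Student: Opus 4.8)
The plan is to show that the map $K\times A\times H\to G$, $(k,a,h)\mapsto kah$, is not surjective by a dimension count at a carefully chosen point, combined with the observation that $KAH$ is closed. Concretely, consider the orbit map $\mu\colon K\times A\to G/H$, $\mu(k,a)=kaH$; we have $KAH=G$ if and only if $\mu$ is surjective. Since $K$ is compact and $A$ is a closed abelian subgroup, the image $KAH/H$ is a closed subset of $G/H$. Therefore, if $\mu$ fails to be submersive at some point where its image would otherwise need to be all of $G/H$, we can conclude that $KAH\neq G$: more precisely, it suffices to exhibit a single point $a\in A$ at which the differential of $\mu$ at $(e,a)$ is not surjective onto $T_{aH}(G/H)$, because then by the constant rank / Sard-type argument the image $\mu(K\times A)$, being closed, is a proper subset (one can also phrase this via $\dim(K\times A)=\dim G/H$, so surjectivity of $\mu$ would force $\mu$ to be a submersion on a dense open set, but we will instead rule out surjectivity directly near a bad point).

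The key computation is the differential of $\mu$ at a point $(e,a)$ with $a=\exp X$, $X\in\af_0$ (embedded diagonally, so $X$ acts as $(X,X,X)$). Writing everything via right translation to the origin, the image of $d\mu_{(e,a)}$ inside $\gf/\hf$ is spanned by the image of $\kf$ under $\Ad(a)^{-1}$ together with $\af=\af_0\times\af_0\times\af_0$ and $\hf=\diag(\gf_0)$. Using the Cartan decomposition $\gf_0=\kf_0\oplus\sf_0$ and root space decomposition relative to $\af_0$, one computes $\Ad(\exp X)^{-1}$ on root vectors; the crucial point is that for a generic (regular) $X$, the subspace
\begin{equation}
\Ad(a)^{-1}\kf + \af + \hf \subset \gf
\end{equation}
has codimension strictly bigger than $\dim\hf$ would predict for equality, i.e.\ it is a proper subspace of $\gf$. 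The mechanism is that all three factors are being conjugated by the \emph{same} element $a$, so the three copies of $\Ad(a)^{-1}\kf_0$ inside $\gf_0^{\,3}$ are ``aligned'' rather than independent; modulo $\hf$ and $\af$ this leaves a genuine gap. Working this out reduces to a statement purely inside $\gf_0\times\gf_0$ after quotienting, or even better, to counting: $\dim\gf = 3\dim\gf_0$, while $\dim\kf + \dim\af + \dim\hf = 3\dim\kf_0 + 3\dim\af_0 + \dim\gf_0$, and one checks the overlaps.

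A cleaner route, which I would pursue as the main line of argument: restrict attention to the third factor. The map $G\to G_0$, $(g_1,g_2,g_3)\mapsto g_3$ followed by $G_0\to G_0/ \text{(something)}$ is not quite enough, so instead use the natural $G$-equivariant identification of $G/H$ with $G_0\times G_0$ via $(g_1,g_2,g_3)H\mapsto (g_1g_3^{-1}, g_2 g_3^{-1})$. Under this identification, $K$ maps into $K_0\times K_0 \cdot (\text{diagonal-ish action})$ and $A=A_0^{\,3}$ acts by $(x,y)\mapsto (a_1 x a_3^{-1}, a_2 y a_3^{-1})$ with $a_1=a_2=a_3$ because of the hypothesis $A_1=A_2=A_3$; so $A$ acts through the \emph{two-variable diagonal-conjugation-type} action of $A_0$. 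Then $KAH$ corresponds to $K_0 A_0 \star (K_0\times K_0)$-type set inside $G_0\times G_0$, and one shows this is not all of $G_0\times G_0$ by noting that a single $A_0$ cannot simultaneously move both $K_0 A_0 K_0 = G_0$ factors to cover an arbitrary pair — pick a pair $(x,y)$ with $x$ in one $A_0$-Cartan-cell and $y$ ``transverse'', e.g.\ with incompatible $KAK$-data, and derive a contradiction.

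The main obstacle I anticipate is making the ``alignment'' or ``incompatibility'' precise in a uniform way across the three allowed groups $\SL(2,\R)$, $\SL(2,\C)$, $\SO_e(n,1)$ — i.e.\ producing a single clean invariant that distinguishes points of $G/H$ not reachable by $KAH$ when $A_1=A_2=A_3$. The honest resolution is probably the dimension/closedness argument: $KAH$ is closed (product of compact $K$, closed $A$, and the $H$-action), so it suffices to prove $\dim \mu(K\times A) < \dim G/H$ at \emph{every} point, equivalently that $d\mu$ has rank $<\dim G/H$ everywhere, equivalently that $\Ad(a)^{-1}\kf + \af + \hf \neq \gf$ for \emph{every} $a\in A$. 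Since $\af_0$ is diagonally embedded, $\Ad(a)$ preserves each factor and acts the same way on each; the computation of $\Ad(a)^{-1}\kf_0 + \af_0 + (\text{root spaces})$ inside $\gf_0$ is elementary (it is the tangent space to a $K_0 A_0$-orbit), and one then verifies by direct linear algebra that summing three aligned copies modulo the diagonal $\hf$ cannot fill $\gf$. I expect this final linear-algebra verification, done once in the language of restricted roots of $\gf_0$, to be short and to be where all the content sits.
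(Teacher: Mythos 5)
There is a genuine gap: none of the three routes you sketch is actually carried out, and the step you defer in each case (``one then verifies by direct linear algebra that summing three aligned copies \ldots cannot fill $\gf$'', respectively ``pick a pair with incompatible $KAK$-data and derive a contradiction'') is precisely where all the content of the lemma lies. Worse, the mechanism you describe for the rank-deficiency argument rests on a misreading of the hypothesis: $\af=\af_0\times\af_0\times\af_0$ is the full product, not the diagonal, so a point of $A$ is $a=(a_1,a_2,a_3)$ with \emph{independent} $a_i\in A_0$, and $\Ad(a)^{-1}\kf$ is $\Ad(a_1)^{-1}\kf_0\times\Ad(a_2)^{-1}\kf_0\times\Ad(a_3)^{-1}\kf_0$ with three different conjugations. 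The relevant coincidence is of the subgroups $A_1=A_2=A_3$, not of the group elements, so the ``alignment'' you invoke is not there. Also note that the naive dimension count does not close the argument: for $G_0=\SO_e(n,1)$ one has $\dim K+\dim A+\dim H-\dim G=\tfrac12(n-2)(n-3)$, which is strictly positive for $n\ge 4$ (and the lemma is stated for an arbitrary non-compact semisimple $G_0$), so even after subtracting $\dim(\af\cap\hf)=1$ there is no obstruction by dimensions, and ``$d\mu$ has rank $<\dim G/H$ everywhere'' is a nontrivial claim that your write-up asserts but never proves.

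For comparison, the paper's proof is short and explicit, and is essentially the completed version of your ``cleaner route''. Fix a regular $a_0\in A_0$ and consider $(g_1,a_0,e)\in G$. If this lies in $KAH$, write $g_i=k_ia_ig$; then $a_0=g_2g_3^{-1}=k_2(a_2a_3^{-1})k_3^{-1}$, and uniqueness in the Cartan decomposition $K_0A_0K_0$ at the regular element $a_0$ forces $k_3\in N_{K_0}(\af_0)$, whence $g_1=g_1g_3^{-1}=k_1a_1a_3^{-1}k_3^{-1}\in K_0A_0$. Since $K_0A_0\subsetneq G_0$, any $(g_1,a_0,e)$ with $g_1\notin K_0A_0$ lies outside $KAH$. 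Equivalently, in your identification $G/H\simeq G_0\times G_0$, membership of $(u,v)$ in $KAH$ means that $u^{-1}.z_0$, $v^{-1}.z_0$ and $z_0$ lie on a common flat $k_3A_0.z_0$ through the origin of $G_0/K_0$, which fails for generic pairs. If you want to salvage your submission, either finish this explicit argument (the Cartan-uniqueness step is the missing ingredient) or, for the rank-deficiency route, actually compute $\Ad(a_1)^{-1}\kf_0\times\Ad(a_2)^{-1}\kf_0\times\Ad(a_3)^{-1}\kf_0+\af+\hf$ in terms of restricted roots for independent $a_1,a_2,a_3$; as it stands the proof is not complete.
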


\begin{proof}
Let $a_0\in A_0$ be a regular element. 
We claim that a triple $(g_1,g_2,g_3)=(g_1,a_0,e)$ belongs to 
$KAH$ only if $g_1\in K_0A_0$.
Assume $g_i=k_ia_ig$ for $i=1,2,3$ with
$k_i\in K_0$, $A_i\in A_0$ and $g\in G_0$. From 
$$a_0=g_2g_3^{-1}=k_2a_2a_3^{-1}k_3^{-1}$$
we deduce that $k_2=k_3$, and from the regularity of $a_0$ 
we then deduce that $k_3$ belongs to the normalizer
$N_{K_0}(\af_0)$ (see \cite{Knapp}, Thm. 7.39). Then
$$g_1=g_1g_3^{-1}=k_1a_1a_3^{-1}k_3^{-1}\in K_0A_0.$$
The lemma follows immediately.
\end{proof}

It was observed in \cite{KSS} that the triple spaces
for the groups considered in (\ref{G0}) are of polar type.
In the following theorem we determine, for these groups, 
all the maximal abelian subspaces $\af$ of $\gf$
for which (\ref{polar dec}) holds. 

\begin{theorem}\label{thm polar}
Let $G_0$ be one of groups {\rm (\ref{G0})} and 
$\af\subset\sf$ as in {\rm (\ref{three a's})}. Then $G=KAH$ 
if and only if $\af_1+\af_2+\af_3$ has
dimension two in $\gf_0$.

In particular, $G/H$ is of polar type.
\end{theorem}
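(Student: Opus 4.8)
The plan is to handle the two implications separately, and to reduce everything to the rank-one case so that the three groups in (\ref{G0}) can be treated uniformly. Note first that for all three families $G_0$ has real rank one, so each $\af_j$ is a line in $\sf_0$, and $\af_1+\af_2+\af_3$ has dimension $1$, $2$, or $3$; dimension $3$ is impossible since $\dim\sf_0$ equals $1$, $3$, or $n$, but more to the point the relevant restriction is that two of the lines may or may not coincide.

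For the ``only if'' direction I would argue contrapositively: if $\dim(\af_1+\af_2+\af_3)=1$, then $\af_1=\af_2=\af_3=\af_0$ up to conjugating the three factors independently by $K_0$ (which changes neither the statement nor the validity of $G=KAH$), so $A=A_0\times A_0\times A_0$ and Lemma \ref{not KAH} applies directly to give $KAH\subsetneq G$. So the content is the ``if'' direction: assuming, after relabeling and $K_0$-conjugation, that $\af_1=\af_2=\af_0$ and $\af_3$ is a different line in $\sf_0$ (hence $\af_1+\af_2+\af_3$ is two-dimensional), show $G=KAH$.

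For the ``if'' direction the key step is a dimension count combined with a transversality/openness argument. Writing $m\colon K\times A\times H\to G$, $(k,a,h)\mapsto kah$, I would first check that $\dim K+\dim A+\dim H-\dim(K\cap?)\ge\dim G$ actually holds with equality of the relevant kind — concretely, that $\dim\kf+\dim\af+\dim\hf=\dim\gf+\dim(\kf\cap\hf)+\dim(\text{overlap in }\af)$ works out so that the generic fiber is finite; this is where the two-dimensionality of $\af_1+\af_2+\af_3$ is used, since a three-dimensional sum would make the image too large to be a subgroup-type constraint and a one-dimensional sum too small. Then I would show the image $KAH$ is open by computing the differential of $m$ at a suitable regular point and checking it is surjective — equivalently, that $\mathrm{Ad}(a^{-1})\sf \cap(\text{stuff}) $ spans appropriately — using the explicit root structure of $\mathfrak{sl}(2,\R)$, $\mathfrak{sl}(2,\C)$, $\mathfrak{so}(n,1)$. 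Finally, and this is the part requiring the most care, I would show $KAH$ is closed, or at least that it exhausts $G$: the clean way is to use the Cartan decomposition $G=K\exp\sf K$ and the $KAK$-type decomposition on each $G_0$-factor to reduce the question to a statement about whether every element of $\exp\sf$ can be moved into $KA$ by right translation by $H=\diag(G_0)$, which becomes a concrete claim about simultaneously diagonalizing a triple $(X_1,X_2,X_3)\in\sf$ up to the diagonal conjugation and left $K$-action.

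The main obstacle I anticipate is the surjectivity (exhaustion) step rather than the openness step: openness is a finite differential computation, but proving $KAH=G$ needs either a topological argument (openness plus connectedness plus a properness/closedness input) or an explicit normal-form algorithm on triples. I would lean on the topological route — show $G\setminus KAH$ is also open, or that $m$ is proper onto its image — but verifying properness for the non-compact factor $A$ is delicate, so the fallback is the hands-on normal form, which for $\SO_e(n,1)$ with general $n$ will be the messiest case. I expect the paper to organize this by first doing $\SL(2,\R)$ by bare hands, then leveraging $\SL(2,\C)$ and $\SO_e(n,1)$ via their common rank-one structure, possibly invoking an infinitesimal-to-global principle of the kind promised in the final section of the introduction.
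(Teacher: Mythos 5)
Your proposal has two genuine gaps, one of them already in the reduction. First, the condition ``$\dim(\af_1+\af_2+\af_3)=2$'' is not the same as ``exactly two of the lines coincide'': three pairwise distinct lines in $\sf_0$ can span a plane, and this generic coplanar case is exactly the one the theorem is about (it is also the case needed for Corollary \ref{strong spherical}). Your reduction to $\af_1=\af_2\neq\af_3$ therefore misses most of the ``if'' direction; moreover the reduction tool you invoke --- conjugating the three factors \emph{independently} by $K_0$ --- does not preserve the validity of $G=KAH$, because $H$ is the diagonal and only simultaneous conjugation by $(k_0,k_0,k_0)$ normalizes it. Second, your claim that ``dimension $3$ is impossible'' is false: for $\SL(2,\C)$ and $\SO_e(n,1)$ with $n\ge 3$ one has $\dim\sf_0\ge 3$, three lines can span a three-dimensional space, and the theorem asserts that $G=KAH$ \emph{fails} there. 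So the ``only if'' direction is not finished by Lemma \ref{not KAH} alone; an argument excluding the three-dimensional case is required. The paper does this by taking three distinct points on a single geodesic of $Z_0=G_0/K_0$ and observing that their image under any isometry, together with the origin $z_0$, lies in a $2$-dimensional totally geodesic submanifold which must then contain all three geodesics $\exp(\af_j).z_0$.

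For the ``if'' direction your plan (openness of $KAH$ via a differential computation, then closedness/properness or a normal form) is only a list of strategies, and the one you lean on --- properness of the multiplication map with the non-compact factor $A$ --- is not established. The paper avoids all of this with a direct intermediate-value argument: reformulate $G=KAH$ as Property \ref{isometry}, reduce to the hyperbolic plane using that any three points lie in a $2$-dimensional totally geodesic submanifold and that $\af_1+\af_2+\af_3$ generates one, then slide a segment of the correct hyperbolic length with endpoints $X_1(s),X_2(s)$ on the first two geodesics, let $X_3(s)$ be the third vertex of the congruent triangle, and observe that the continuous curve $s\mapsto X_3(s)$ joins two points symmetric about $z_0$ and hence must cross the third geodesic. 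That single connectedness argument replaces both your openness and exhaustion steps, and it is where the two-dimensionality (rather than your dimension count) is actually used.
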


We shall approach
$G=KAH$ by a geometric argument. Let $Z_0=G_0/K_0$ be the
Riemannian symmetric space associated with $G_0$, and
let $z_0=eK_0\in Z_0$ denote its origin. Recall that 
(up to covering) $G_0$
is the identity component of the group of isometries of $Z_0$. 
Then it is easily seen that $G=KAH$ is equivalent to the following:

\begin{property}\label{isometry}
For every triple $(z_1,z_2,z_3)$ of points $z_j\in Z_0$
there exist a triple $(y_1,y_2,y_3)$ of points $y_j\in Z_0$
with $y_j\in A_jz_0$ for each $j$, and an isometry $g\in G_0$
such that $gz_j=y_j$ for $j=1,2,3$.
\end{property}

In order to illustrate the idea of proof, let us first state
and prove a Euclidean analogue.

\begin{proposition}\label{Euclidean} 
Let $\ell_1, \ell_2, \ell_3 \subset\R^n$ 
be lines 
through the origin $O$. The following statements are
equivalent
\begin{enumerate}
\item $\dim(\ell_1+\ell_2+\ell_3)=2$
\item For every triple of points $z_1, z_2, z_3\in\R^2$
there exists a rigid motion $g$ of $\R^n$
with $g(z_j)\in\ell_j$ for each $j=1,2,3$.
\end{enumerate}
\end{proposition}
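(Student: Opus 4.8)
The plan is to prove the equivalence by treating each implication separately, with the geometric content concentrated in (1) $\Rightarrow$ (2).

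\medskip

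\emph{The direction (2) $\Rightarrow$ (1).} Suppose $\dim(\ell_1+\ell_2+\ell_3)\neq 2$; I want to produce a triple $z_1,z_2,z_3\in\R^2$ that cannot be moved onto the lines. If the dimension is $\le 1$, then after a rigid motion all of $\ell_1,\ell_2,\ell_3$ lie on a single line $\ell$, so $g(z_j)\in\ell$ for all $j$ forces $\{z_1,z_2,z_3\}$ to be collinear (rigid motions preserve collinearity), and any non-collinear triple is a counterexample. If the dimension is $3$, then $\ell_1,\ell_2,\ell_3$ are linearly independent; take $z_1=z_2=z_3=O$ — this is fine — so instead take $z_1,z_2,z_3$ to be three distinct points, say mutually at distance $1$. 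After applying $g$ the images $g(z_j)$ lie on three independent lines through $O$, hence $|g(z_i)-g(z_j)|^2 = |g(z_i)|^2+|g(z_j)|^2$ by orthogonality, and one checks the three equations $|g(z_i)|^2+|g(z_j)|^2=1$ force $|g(z_j)|^2=\tfrac12$ for all $j$; this is consistent, so I must instead choose the pairwise distances incompatibly (e.g.\ an equilateral-type obstruction using four points, or distances $1,1,2$ which violate the triangle-type identity). The cleanest counterexample: pick $z_1,z_2,z_3$ collinear and distinct; then $g(z_1),g(z_2),g(z_3)$ are collinear points lying on three independent lines through $O$, forcing all three to equal $O$, contradicting distinctness. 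So in both degenerate cases a counterexample exists, proving the contrapositive of (2) $\Rightarrow$ (1).

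\medskip

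\emph{The direction (1) $\Rightarrow$ (2).} Assume $\dim(\ell_1+\ell_2+\ell_3)=2$. After a rigid motion I may assume all three lines lie in a fixed coordinate plane $\R^2\subset\R^n$, and since rigid motions of $\R^n$ restrict to (or extend from) rigid motions of that plane, it suffices to treat $n=2$. Write $\ell_j = \R u_j$ for unit vectors $u_j$, and let the target be to find a rigid motion $g=(\text{rotation }R)+(\text{translation }v)$ with $R z_j + v \in \R u_j$ for each $j$; equivalently $\langle R z_j + v, u_j^\perp\rangle = 0$ for $j=1,2,3$, where $u_j^\perp$ is a unit normal to $\ell_j$. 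This is a system of three real equations in the three real unknowns $(\theta, v_1, v_2)$, where $\theta$ parametrizes $R$. The two equations coming from $j=1,2$ are linear in $v$ once $\theta$ is fixed, and — provided $\ell_1\neq\ell_2$, which I can arrange by relabeling since not all three lines coincide — they determine $v$ uniquely as a (trigonometric) function $v(\theta)$. Substituting into the third equation yields a single equation $F(\theta)=0$ with $F$ continuous and $2\pi$-periodic; I then show $F$ changes sign, hence has a zero by the intermediate value theorem. Concretely, $F(\theta)$ is an expression of the form $a\cos\theta + b\sin\theta + c$ after simplification (an affine-trigonometric function), and the key point is that its "amplitude" $\sqrt{a^2+b^2}$ dominates $|c|$, which is exactly where the hypothesis $\dim=2$ (as opposed to $\dim\le1$, a degenerate sub-case handled separately) enters; alternatively one observes directly that $F(\theta+\pi)=-F(\theta)+(\text{const})$ type symmetry, or simply that replacing $\theta$ by $\theta+\pi$ flips the relevant orientation and forces a sign change.

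\medskip

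\emph{Main obstacle.} The genuine work is in (1) $\Rightarrow$ (2): reducing to $n=2$ is routine, but verifying that the single-variable equation $F(\theta)=0$ always has a solution requires either an explicit trigonometric computation of $F$ and an amplitude estimate, or a cleaner topological/degree argument (e.g.\ showing the map $\theta\mapsto$ (signed position of $g z_3$ relative to $\ell_3$) has odd winding, or using a connectedness/continuity argument on the configuration space of triples with one point already placed on $\ell_1$ and another on $\ell_2$). I expect the slickest route is to fix $z_1$ onto $\ell_1$ and $z_2$ onto $\ell_2$ using the remaining one-parameter freedom of rigid motions preserving such a partial placement, track how $z_3$ moves along a closed curve as that parameter runs over a full period, and check this curve must cross $\ell_3$ — this is where the two-dimensionality of $\ell_1+\ell_2+\ell_3$ guarantees the curve is not trapped on one side. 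The degenerate cases ($\ell_1+\ell_2+\ell_3$ of dimension $1$, or two of the lines equal) should be dispatched first by hand, as they are either trivial or reduce the count of equations.
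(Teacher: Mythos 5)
Your proposal is correct in substance, but for the main implication (1)$\Rightarrow$(2) it takes a genuinely different route from the paper. The paper argues geometrically: it slides a segment of length $|z_1-z_2|$ with endpoints $X_1(s)\in\ell_1$, $X_2(s)\in\ell_2$ along a continuous path with $X_2(\pm1)=O$ and $X_1(\pm1)$ antipodal, transports $z_3$ by the corresponding rigid motions $g_s$, and observes that the curve $X_3(s)=g_s(z_3)$ joins two antipodal points and hence meets every line through $O$. Your elimination scheme --- write $g=R(\theta)\cdot{}+v$, solve the two linear conditions $\langle R(\theta)z_j+v,\,u_j^\perp\rangle=0$ ($j=1,2$) for $v=v(\theta)$ when $\ell_1\ne\ell_2$, and apply the intermediate value theorem to $F(\theta)=\langle R(\theta)z_3+v(\theta),\,u_3^\perp\rangle$ --- is sound, but you must commit to one of your three hedged justifications for the sign change, and the right one is the cleanest: every term of $F$ is linear and homogeneous in $(\cos\theta,\sin\theta)$, since both $\langle R(\theta)z_3,u_3^\perp\rangle$ and $v(\theta)=-M^{-1}\bigl(\langle R(\theta)z_1,u_1^\perp\rangle,\langle R(\theta)z_2,u_2^\perp\rangle\bigr)^t$ are. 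Hence the constant term $c$ is zero and $F(\theta+\pi)=-F(\theta)$ exactly; no amplitude estimate is needed. With that pinned down your version is arguably tidier than the paper's (it handles coincident $z_j$ uniformly, and the antipodal symmetry is exact rather than read off a constructed path). What the paper's version buys is that the identical sliding argument is repeated verbatim in the hyperbolic plane in the proof of Theorem \ref{thm polar}, which is the real purpose of the proposition; your coordinate computation is specific to the Euclidean setting and would not transfer as directly.

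For (2)$\Rightarrow$(1) you use the same witnesses as the paper (a non-collinear triple to exclude $\dim=1$, a distinct collinear triple to exclude $\dim=3$), but your justification in the second case overstates the conclusion: three collinear points lying on three linearly independent lines through $O$ need not all equal $O$ (for instance $O,O,e_3$ on the three coordinate axes). What is true, and what you need, is that at least two of them must coincide at $O$: if the affine line $L$ through the images passes through $O$, it can meet at most one of three independent lines $\ell_j$ outside the origin; if it does not pass through $O$, then all three images are nonzero and each $\ell_j$ lies in the $2$-plane spanned by $L$ and $O$, contradicting independence. Either way distinctness of the images is violated, so your conclusion stands once you replace ``forcing all three to equal $O$'' by this argument. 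The abandoned first attempt with unit-distance triples should simply be deleted.
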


\begin{proof} (1)$\Rightarrow$(2). Since the group of rigid
motions is transitive on the 2-planes in $\R^n$, we may assume
that $z_1$, $z_2$ and $z_3$ belong to the subspace spanned
by the lines. This reduces the proof to the case $n=2$.

We shall assume the $z_j$ are distinct as 
otherwise the result is easily seen. 
Furthermore, as at most two of the lines are identical,
let us assume that $\ell_1\neq \ell_2$.
Let $d$ denote the distance between $z_1$ and $z_2$,
and consider the set $\mathfrak X$
of pairs $(X_1,X_2)$ of points $X_1\in\ell_1$
and $X_2\in\ell_2$ with distance $d$ from each other.
Let $D_1$ be a point on $\ell_1$
with distance $d$ to the origin, then $(D_1,O)$
and $(-D_1,O)$ belong to $\mathfrak X$, and it follows from the 
geometry that we can connect these points by a continuous
curve $s\mapsto(X_1(s),X_2(s))$ in $\mathfrak X$,
say with $s\in [-1,1]$. For example, we can arrange that first
$X_1(s)$ moves from $-D_1$ to $O$ along $\ell_1$, 
while at the same time $X_2(s)$ moves along $\ell_2$ 
at distance $d$ from $X_1(s)$. Then $X_2(s)$ moves
from $O$ to a point $D_2\in\ell_2$ at distance $d$ from $O$. 
After that, $X_1(s)$ moves from $O$ to $D_1$,
while $X_2(s)$ moves back from $D_2$ to~$O$.

When $s$ passes through the interval $[-1,1]$, 
the line segment from $X_1(s)$ to $X_2(s)$ slides 
with its endpoints on the two lines. We define 
$X_3(s)$ such that the three points form a triangle
congruent to the one formed by $z_1$, $z_2$ and $z_3$.
In other words, for each $s\in[-1,1]$ there exists a 
unique rigid motion 
$g_s$ of $\R^n$ for which $g_s(z_1)=X_1(s)$ and
$g_s(z_2)=X_2(s)$. We let $X_3(s)=g_s(z_3)$. 
See the following figure. 
\newpage

\centerline{\includegraphics{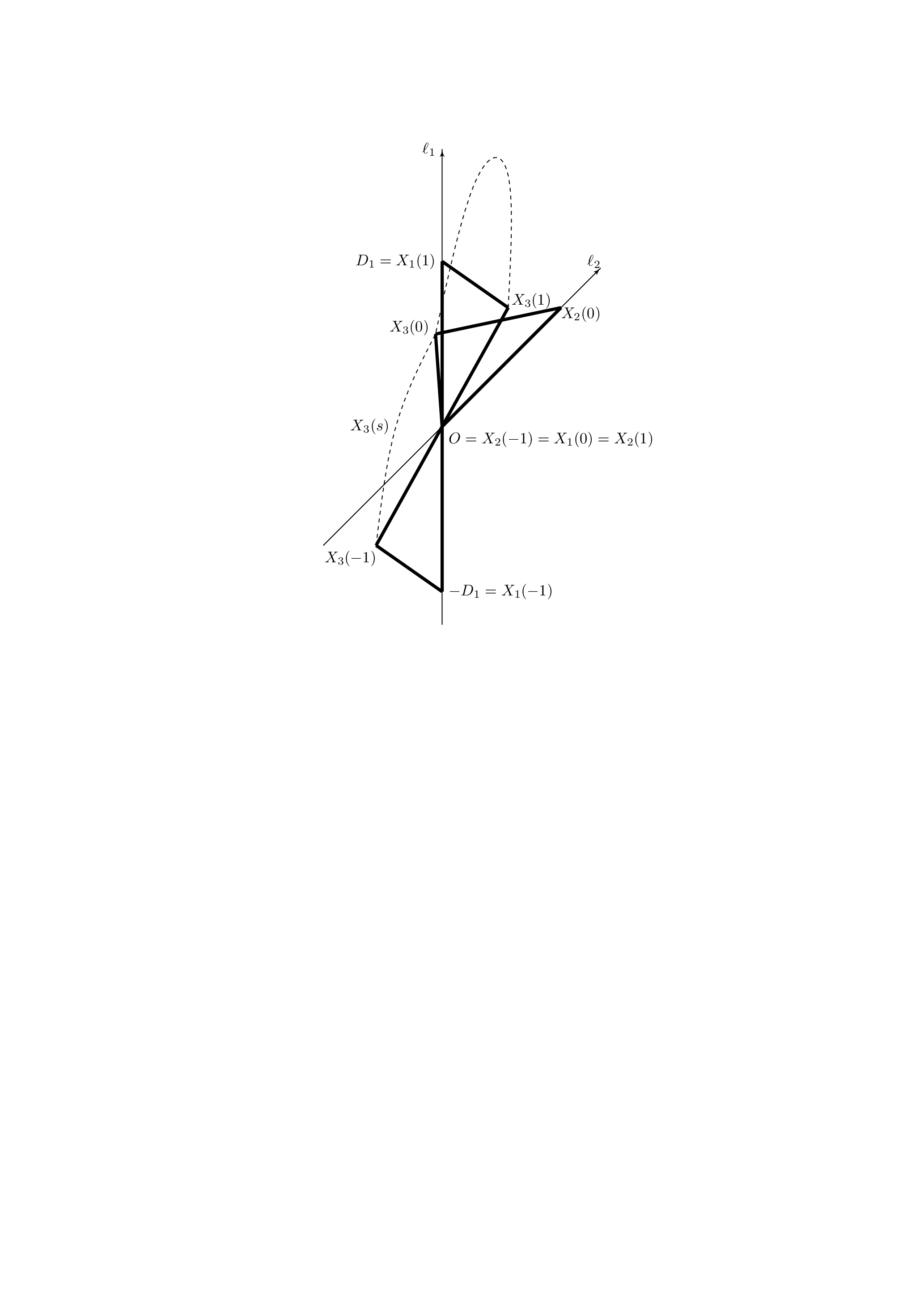}}

As $X_1(s)$ and $X_2(s)$ depend continuously
on $s$, then so does $g_s$ (in the standard topology of
the group of rigid motions) and hence also $X_3(s)$.
Since $X_1(\pm1)$ are opposite points while
$X_2(\pm1)=O$, the points $X_3(\pm 1)$ must
be opposite as well. Since $s\mapsto X_3(s)$ 
is a continuous curve that connects two opposite
points, it intersects with every line
through $O$. Let $s\in [-1,1]$ be a parameter value 
for which $X_3(s)\in \ell_3$. Now $g_s$ is the
desired rigid motion.

(2)$\Rightarrow$(1). Note that a rigid motion maps
affine lines to affine lines.
If $\dim(\ell_1+\ell_2+\ell_3)=1$
then $\ell_1=\ell_2=\ell_3$, and it is clear that
only triples of points which are positioned in a common 
affine line can be brought into it by a rigid motion. 
Hence $\dim(\ell_1+\ell_2+\ell_3)=1$ is excluded.

Let $z_1,z_2,z_3$ be an arbitrary
triple of distinct points located on a common affine line
$\ell$,
and let  $g$ be a rigid motion which brings these points 
into the $\ell_j$. Then $O$ can be 
one of the points $g(z_j)$, or not.
In the first case, say if $g(z_1)=O$, it follows that 
$\ell_2$ and $\ell_3$ are both equal to $g(\ell)$,
since each of these lines have two points in common with 
$g(\ell)$. Hence $\dim(\ell_1+\ell_2+\ell_3)\le 2$. 
In the second case, the line $g(\ell)$ together with $O$
spans a 2-dimensional subspace of $\R^n$,
which contains all the lines $\ell_j$.
Hence again $\dim(\ell_1+\ell_2+\ell_3)\le 2$.
\end{proof}

We proceed with the proof of Theorem \ref{thm polar}.

\begin{proof} 
Note that $\SL(2,\R)$ and $\SL(2,\C)$ are locally isomorphic to
$\SO_e(2,1)$ and $\SO_e(3,1)$, respectively.
The centers of $\SL(2,\R)$ and $\SL(2,\C)$ belong to
$K$, and hence $G=KAH$ will hold for the triple
spaces of these groups if and only
if it holds for the triple spaces of 
their adjoint groups. Thus
it suffices to consider $G_0=\SO(n,1)$ with $n\ge 2$.

The elements in $\so(n,1)$ have the form
\begin{equation}\label{Xmatrix}
X=\begin{pmatrix} A&b\\b^t&0 \end{pmatrix}
\end{equation}
where $A\in\so(n)$ and $b\in\R^n$, and 
$\sf_0$ consists of the elements with $A=0$.

Assume first that $\af_1+\af_2+\af_3$ is 2-dimensional.
By transitivity of the action of $K_0=\SO(n)$ on
the 2-dimensional subspaces of $\R^n$ we may 
assume that $\af_1+\af_2+\af_3$ consists of the
matrices $X$ as above with $A=0$ and 
$b$ non-zero only in the last two coordinates. 
Hence $\af_1+\af_2+\af_3$ is contained in
the $\so(2,1)$-subalgebra in
the lower right corner of $\so(n,1)$.
It follows that 
$\exp(\af_1+\af_2+\af_3).z_0$
is a 2-dimensional totally geodesic submanifold
of $Z_0$.

Let $z_1,z_2,z_3\in Z_0$ be given. Every triple
of points in $Z_0$ belongs to a 2-dimensional
totally geodesic submanifold $Z_0'$ of $Z_0$. 
For example, in 
the model of $Z_0$ as a one-sheeted hyperboliod 
in $\R^{n+1}$, we can obtain $Z_0'$ as the 
intersection of $Z_0$ 
with a 3-dimensional subspace of $\R^{n+1}$
containing the three points. Since $G_0$ is
transitive on geodesic submanifolds, we may
assume that $z_1,z_2,z_3$ are contained in
the submanifold generated by $\af_1+\af_2+\af_3$.
We have thus essentially reduced to the case 
$n=2$, and shall assume $n=2$ from now on.

We proceed exactly as in the Euclidean case 
and produce a pair of points $X_1(s)$ and
$X_2(s)$ on the geodesic lines $\exp(\af_1).z_0$ and 
$\exp(\af_2).z_0$, respectively. The two points are chosen
so that they have the same non-Euclidean distance 
from each other as $z_1$ and $z_2$, 
and they depend continuously on $s\in[-1,1]$.
Moreover, $X_1(-1)$ and $X_1(1)$ are symmetric with respect
to $z_0$, while $X_2(-1)=X_2(1)=z_0$.
As $Z_0$ is two-point homogeneous,
there exists for each $s\in [-1,1]$
a unique isometry $g_s\in G_0$
such that $g_s(z_j)=X_j(s)$ for $j=1,2$.
As before, a value of $s$, where the continuous
curve $s\mapsto g_s(z_3)$ intersects $\exp(\af_3)$,
produces the desired isometry $g_s$ of Property
\ref{isometry}. Hence $G=KAH$.

We return to the case $n\ge 2$ and
assume conversely that $G=KAH$. It follows from
Lemma \ref{not KAH} that $\dim(\af_1+\af_2+\af_3)>1$.
We want to exclude $\dim(\af_1+\af_2+\af_3)=3$.
Again we follow the Euclidean proof and select an arbitrary
triple of distinct points $z_1,z_2,z_3$ on a single geodesic $\gamma$
in $Z_0$. Then there is $g\in G_0$ such that
$gz_j=y_j$ for some $y_j\in\exp(\af_j).z_0$,
for $j=1,2,3$. If one of the $y_j$'s,
say $y_1$, is $z_0$, then $\exp(\af_2).z_0=\exp(\af_3).z_0=g(\gamma)$
and hence $\af_2=\af_3$. Otherwise, the geodesic $g(\gamma)$ is contained,
together with $O$, in a 2-dimensional totally geodesic 
submanifold of $Z_0$. This submanifold necessarily contains
the geodesic $\exp(\af_j).z_0$ for each $j$. 
Hence $\dim(\af_1+\af_2+\af_3)\le 2$.
\end{proof}

\section{Uniqueness}

If $G/H$ is a homogeneous space of polar type, so that
every element $g\in G$ allows a decomposition
$g=kah$, it is of interest to know to which extend
the components in this decomposition are unique. 
An obvious non-uniqueness is caused by the normalizer
$N_{K\cap H}(\af)$ of $\af$ in $K\cap H$, which
acts on $A$ by conjugation.
In the case of a symmetric space, it is known 
(see \cite{Sbook}, Prop.~7.1.3)
that the $A$ component of every $g\in G$
is unique up to such conjugation.
For our current triple spaces the description of 
which elements in $A$ generate the same
$K\times H$ orbit appears to be more complicated,
unless $\af_1= \af_2\perp \af_3$.

\begin{theorem}\label{unique} 
Let $G/H$ be the triple space with
$G_0$ as in {\rm (\ref{G0})}, and let 
$\af$ be as in {\rm (\ref{three a's})} with
$\af_1=\af_2\perp\af_3$. 
Let $a=(a_1,a_2,a_3)\in A$ with $a_1\neq a_2$ and let
$a'=(a_1',a_2',a_3')\in A$. 
Then $KaH=Ka'H$ if and only if
$a$ and $a'$ are conjugate by $N_{K\cap H}(\af)$.
\end{theorem}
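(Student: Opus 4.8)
The plan is to translate the statement into hyperbolic geometry and argue there. For each group in (\ref{G0}) the Riemannian symmetric space $Z_0=G_0/K_0$ is a real hyperbolic space on which $G_0$ acts by isometries, and $\af_0\subset\sf_0$ is one-dimensional. Write $A_j=\exp\af_j$ and $\gamma_j=A_jz_0$ for the corresponding geodesic through the origin; the hypothesis $\af_1=\af_2\perp\af_3$ then says that $\gamma_1$ and $\gamma_2$ are one and the same geodesic $\gamma$, and that $\gamma_3$ meets $\gamma$ orthogonally at $z_0$. The first step is to record the standard bijection between $K\backslash G/H$ and the set of $G_0$-congruence classes of triples of points of $Z_0$ (the identification underlying Property~\ref{isometry}), under which $KaH$ corresponds to the triple $(a_1^{-1}z_0,a_2^{-1}z_0,a_3^{-1}z_0)$, with $j$-th entry on $\gamma_j$. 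Concretely, from a relation $a'=kah$ one reads off the relevant isometry as $g_0=h_0^{-1}$, using $k_jz_0=z_0$. Thus $KaH=Ka'H$ holds precisely when there is $g_0\in G_0$ with $g_0(a_j^{-1}z_0)=(a_j')^{-1}z_0$ for $j=1,2,3$.

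One implication is trivial: if $a'=\nu a\nu^{-1}$ with $\nu\in N_{K\cap H}(\af)$, then $\nu\in K\cap H$, so $\nu^{-1}\in H$ and $a'\in KaH$. For the converse, let $g_0$ be as above. The hypothesis $a_1\neq a_2$ is exactly the statement that $a_1^{-1}z_0\neq a_2^{-1}z_0$; hence $g_0$ carries two distinct points of $\gamma$ to two distinct points of $\gamma$, and since a geodesic is determined by any two of its points, $g_0$ preserves $\gamma$. The crucial point is then that $g_0$ fixes $z_0$. To see this, let $p_\gamma\colon Z_0\to\gamma$ be nearest-point projection, which is equivariant under the stabilizer of $\gamma$ in $G_0$; applying this to $g_0$ and the point $a_3^{-1}z_0$ gives $p_\gamma((a_3')^{-1}z_0)=g_0\,p_\gamma(a_3^{-1}z_0)$. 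Because $\gamma_3$ is orthogonal to $\gamma$ at $z_0$, the geodesic from any point of $\gamma_3$ to $z_0$ lies in $\gamma_3$ and so meets $\gamma$ orthogonally at $z_0$; by strict convexity of the distance function along geodesics this forces $p_\gamma(q)=z_0$ for every $q\in\gamma_3$ (trivially so when $q=z_0$). Since $a_3^{-1}z_0$ and $(a_3')^{-1}z_0$ both lie on $\gamma_3$, this gives $z_0=g_0z_0$.

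It remains to produce the normalizing element. Fixing $z_0$, the isometry $g_0$ lies in $K_0$; preserving $\gamma=\exp(\af_1)z_0$ as well, $\Ad(g_0)$ preserves the line $\af_1$, say $\Ad(g_0)H=\epsilon H$ with $\epsilon\in\{\pm1\}$ for a generator $H$ of $\af_1$. Comparing $g_0(a_j^{-1}z_0)=(a_j')^{-1}z_0$ along $\gamma$ then gives $a_j'=g_0a_jg_0^{-1}$ for $j=1,2$. If $a_3\neq e$, then $a_3^{-1}z_0$ and $(a_3')^{-1}z_0$ are points of $\gamma_3$ distinct from $z_0$, so (again by uniqueness of geodesics through two points) $g_0\gamma_3=\gamma_3$, whence $\Ad(g_0)$ normalizes $\af_3$ and $a_3'=g_0a_3g_0^{-1}$; as $\af_1=\af_2$, this shows $\nu:=\diag(g_0)\in N_{K\cap H}(\af)$ and $\nu a\nu^{-1}=a'$. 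If $a_3=e$, then also $a_3'=e$, and it suffices to pick any $\nu_0\in N_{K_0}(\af_1)\cap N_{K_0}(\af_3)$ with $\Ad(\nu_0)H=\epsilon H$, which exists in $K_0$ ($\nu_0=e$ for $\epsilon=+1$, and for $\epsilon=-1$ an element of $K_0$ acting by $-1$ on $\af_1\oplus\af_3$); then $\nu:=\diag(\nu_0)$ conjugates $a$ to $a'$.

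The main obstacle is the geometric step that $g_0$ fixes $z_0$: it rests entirely on the orthogonality $\af_1\perp\af_3$, which makes $z_0$ the unique nearest point on $\gamma$ of every point of $\gamma_3$. Drop it, and $g_0$ may translate along $\gamma$, so that $KaH$ depends on more than the $N_{K\cap H}(\af)$-conjugacy class of $a$ — the very complication noted before the theorem. A secondary, routine point is the existence of the elements $\nu_0$: for $\SL(2,\R)$ with $\epsilon=-1$ one may use the rotation by $\pi/2$ in $K_0=\SO(2)$, whose adjoint action on $\sf_0$ is $-\mathrm{id}$, while for the remaining groups an involution in $\SO(n)$ ($n\ge2$) acting by $-1$ on $\af_1\oplus\af_3$ and by $+1$ on its orthogonal complement will do (lifted to $\SU(2)$ when $K_0=\SU(2)$).
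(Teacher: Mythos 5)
Your proof is correct, and its overall strategy coincides with the paper's: translate $KaH=Ka'H$ into the existence of a single isometry $g_0\in G_0$ carrying the triple $(a_1^{-1}z_0,a_2^{-1}z_0,a_3^{-1}z_0)$ to $((a_1')^{-1}z_0,(a_2')^{-1}z_0,(a_3')^{-1}z_0)$, use $a_1\neq a_2$ to see that $g_0$ preserves the geodesic $\gamma=A_1z_0$ (this is the content of the paper's Lemma \ref{two items}(2)), and then use $\af_1\perp\af_3$ together with nonpositive curvature to force $g_0z_0=z_0$, after which the identification of the conjugating element is routine. The one genuine difference is the implementation of the pivotal step $g_0z_0=z_0$: the paper (Lemma \ref{third item}) forms the geodesic triangle with vertices $z_0$, $g_0z_0$ and $(a_3')^{-1}z_0$ and invokes the impossibility of two right angles in a proper geodesic triangle in nonpositive curvature, which obliges it to assume $a_3,a_3'$ not both $e$ and to treat $a_3=a_3'=e$ separately; your argument via the equivariance of the nearest-point projection $p_\gamma$ under the stabilizer of $\gamma$, combined with the observation that every point of $\gamma_3$ projects to $z_0$, establishes $g_0z_0=z_0$ uniformly in $a_3$ (the two facts are of course close relatives, both being the convexity of $t\mapsto d(q,\gamma(t))$ in disguise). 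A case split on $a_3=e$ still reappears at the end when you manufacture the normalizing element, exactly as in the paper, and your explicit construction of $\nu_0$ for $\epsilon=-1$ matches the paper's Lemma \ref{normalizer}. I see no gaps.
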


We first determine
explicitly which pairs of elements $a,a'\in A$ are 
$N_{K\cap H}(\af)$-conjugate when $\af_1=\af_2\perp\af_3$.

\begin{lemma}\label{normalizer} 
Let $\af$ be as above. 
Then $a,a'\in A$ are conjugate by $N_ {K\cap H}(\af)$ if and only if
\begin{enumerate}
\item $(a_1',a_2')=(a_1,a_2)^{\pm 1}$ and $a_3'=a_3^{\pm 1}$
if $n>2$
\item $(a_1',a_2',a_3')=(a_1,a_2,a_3)^{\pm 1}$
if $n=2$.
\end{enumerate}
\end{lemma}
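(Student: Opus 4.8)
The plan is to compute $N_{K\cap H}(\af)$ explicitly and then read off its conjugation action on $A$. First I would reduce to the adjoint case, as at the beginning of the proof of Theorem~\ref{thm polar}: the centers of $\SL(2,\R)$ and $\SL(2,\C)$ lie in $K_0$, they act trivially by conjugation on $A$, and $A=A_1\times A_2\times A_3$ embeds into the adjoint group (each $A_j=\exp\af_j$ meets the center trivially). Hence $N_{K\cap H}(\af)$-conjugacy of elements of $A$ is unchanged on passing to $\SO_e(2,1)$ and $\SO_e(3,1)$, so it suffices to treat $G_0=\SO_e(n,1)$ with $n\ge2$. For these groups $K_0=\SO(n)$, and $\sf_0$ is the standard representation space $\R^n$ of $K_0$ — the vectors $b$ in the matrices (\ref{Xmatrix}) — with $\Ad(k)$ acting as $k$.

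Next, $K\cap H=\diag(K_0)$, and $(k,k,k)$ conjugates $A_j$ to $\exp(\Ad(k)\af_j)$; therefore $\diag(k)$ normalizes $A$ precisely when $\Ad(k)$ stabilizes each line $\af_1,\af_2,\af_3$. Since $\af_1=\af_2$ and $\af_3\perp\af_1$ by hypothesis, this says exactly that $k\in\SO(n)$ stabilizes the two mutually orthogonal lines $\af_1$ and $\af_3$ in $\R^n$. Such a $k$ acts on $\af_1$ by a sign $\e_1\in\{\pm1\}$, on $\af_3$ by a sign $\e_3\in\{\pm1\}$, and on the complement $W=(\af_1\oplus\af_3)^\perp$ by some $w\in\OO(W)$, subject only to $\e_1\e_3\det w=1$; conversely every such datum $(\e_1,\e_3,w)$ defines an element of $\SO(n)$ stabilizing the two lines.

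It then remains to split into cases and translate back. If $n>2$ then $\dim W=n-2\ge1$, so $\OO(W)$ contains maps of either determinant, and for every $(\e_1,\e_3)\in\{\pm1\}^2$ there is an admissible $w$; thus all four sign patterns are realized. If $n=2$ then $W=0$ and the constraint reads $\e_1\e_3=1$, i.e.\ $\e_1=\e_3$, so only the patterns $(1,1)$ and $(-1,-1)$ occur. Finally, if $k$ realizes $(\e_1,\e_3)$ and $a_j=\exp X_j$ with $X_j\in\af_j$, then $\diag(k)$ sends $a=(a_1,a_2,a_3)$ to $(\exp\e_1 X_1,\exp\e_1 X_2,\exp\e_3 X_3)=(a_1^{\e_1},a_2^{\e_1},a_3^{\e_3})$, the second exponent being $\e_1$ because $\af_2=\af_1$ is moved by the same sign. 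Reading off the admissible $(\e_1,\e_3)$ in each case yields precisely (1) for $n>2$ and (2) for $n=2$. The computation is routine; the only points needing attention are that both hypotheses $\af_1=\af_2$ and $\af_1\perp\af_3$ are used — the first couples the exponents on the first two components, the second makes the stabilizer condition in $\SO(n)$ elementary — and that for $n=2$ the single determinant constraint in $\SO(2)$ is exactly what forces $\e_1=\e_3$, which is the source of the difference between (1) and (2).
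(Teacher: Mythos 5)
Your proof is correct and follows essentially the same route as the paper: identify $N_{K\cap H}(\af)$ with the elements of $\SO(\sf_0)$ stabilizing the two orthogonal lines $\af_1=\af_2$ and $\af_3$, and use the determinant constraint to see that all four sign patterns $(\delta,\epsilon)$ occur when $n>2$ but only $\delta=\epsilon$ when $n=2$. The paper phrases the reduction via surjectivity of $\Ad\colon K_0\to\SO(\sf_0)$ rather than passing to the adjoint group, but this is the same argument.
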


\begin{proof}
The normalizer  $N_{K\cap H}(\af)$ consists of all
the diagonal elements $k=(k_0,k_0,k_0)\in G$ for which 
$$k_0\in N_{K_0}(\af_1)\cap N_{K_0}(\af_2)\cap N_{K_0}(\af_3).$$
As elements $a_j,a_j'\in A_j$ are
$N_{K_0}(\af_j)$-conjugate if and only if $a_j'=a_j^{\pm1}$,
only the pairs mentioned under 
(1) can be conjugate when $\af_1=\af_2$.

Let $\delta,\epsilon=\pm1$. For the groups in (\ref{G0})
the adjoint representation is surjective $K_0\to\SO(\sf_0)$.
If $n>2$ then there exists a transformation in
$\SO(\sf_0)$ which acts by 
$\delta$ on $\af_1=\af_2$ and by $\epsilon$ on $\af_3$.
Its preimages in $K_0$ conjugate
$(a_1,a_2,a_3)$ to $(a_1^{\delta},a_2^{\delta},a_3^{\epsilon})$.
When $n=2$ such a transformation exists if and only if 
$\delta=\epsilon$. The lemma follows.
\end{proof}

The following lemmas are used in the proof
of Theorem \ref{unique}. Here
$G_0$ can be any real reductive group
with Cartan decomposition $\gf_0=\kf_0+\sf_0$.

\begin{lemma}\label{XUX} 
Let $X,U\in\sf_0$. Then $\exp X\exp U\exp X\in\exp\sf_0$.
\end{lemma}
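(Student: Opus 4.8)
The plan is to prove the statement by passing to the global Cartan decomposition of $G_0$ and exploiting the fact that the map $\exp\colon\sf_0\to G_0/K_0$ is a diffeomorphism. Write $P_0=\exp\sf_0$, which is precisely the set of elements $g\in G_0$ that are fixed by the Cartan involution $\theta$ composed with inversion, i.e.\ $\theta(g)=g^{-1}$; equivalently, $P_0$ consists of the symmetric positive-definite elements in a suitable faithful representation. The element $g:=\exp X\exp U\exp X$ is manifestly fixed by the anti-involution $\sigma(g):=\theta(g)^{-1}$, since $\theta(\exp X)=\exp(-X)=(\exp X)^{-1}$ and likewise for $U$, so that $\sigma(g)=\theta(\exp X)^{-1}\theta(\exp U)^{-1}\theta(\exp X)^{-1}=\exp X\exp U\exp X=g$. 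Thus the claim reduces to showing that every $\sigma$-fixed element of $G_0$ lies in $P_0=\exp\sf_0$.

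The key step is the standard fact that $G_0=P_0K_0$ with uniqueness of the factors: writing $g=pk$ with $p\in P_0$, $k\in K_0$, we compute $\sigma(g)=\theta(k)^{-1}\theta(p)^{-1}=k^{-1}p$. If $g$ is $\sigma$-fixed then $pk=k^{-1}p$, hence $p^{-1}k^{-1}pk^{-1}=e$, i.e.\ $k^{-2}=p^{-1}\theta$-conjugated appropriately; more cleanly, $g\sigma(g)^{-1}=e$ gives $g=\sigma(g)$, and then $g^2=g\,\sigma(g)=pk\,k^{-1}p=p^2\in P_0$. Since squaring is a bijection of $P_0$ onto itself (its inverse being $\exp(\tfrac12\log(\cdot))$, using that $\exp\colon\sf_0\to P_0$ is a diffeomorphism), and since $g^2\in P_0$, one concludes $g\in P_0$ provided one also knows $g$ itself is already of the form $\exp(\text{something})$ — but in fact $g=pk$ and $g^2=p^2$ forces $k=e$ directly: from $pk=k^{-1}p$ we get $p^{-1}kp=k^{-1}$, so $k$ is conjugate to $k^{-1}$ by an element of $P_0$, and taking the $P_0K_0$-decomposition of both sides and using uniqueness yields $k=k^{-1}$ and $p^{-1}kp=k$, whence $k^2=e$ and $k$ centralizes $p$; then $g=pk$ with $k^2=e$, and $g^2=p^2$ together with injectivity of squaring on the positive part forces the $K_0$-part of $g$ to be trivial, i.e.\ $g=p\in\exp\sf_0$.

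The main obstacle is the bookkeeping in the previous paragraph: one must be careful that the $P_0K_0$-decomposition is genuinely unique and that "squaring is bijective on $P_0$" is used legitimately — both are consequences of $\exp\colon\sf_0\xrightarrow{\sim}P_0$ being a diffeomorphism, which holds for any real reductive $G_0$ with the given Cartan decomposition. An alternative, perhaps cleaner, route that avoids the involution-fixed-point argument is to observe directly that $\exp X\exp U\exp X$ and its inverse $\exp(-X)\exp(-U)\exp(-X)=\theta(\exp X\exp U\exp X)$ are $\theta$-images of one another, so writing $g=\exp Y\,k$ with $Y\in\sf_0$, $k\in K_0$ via polar decomposition and applying $\theta$ gives $\theta(g)=\exp(-Y)k$; but $\theta(g)=g^{-1}=k^{-1}\exp(-Y)$, and comparing these two expressions for $\theta(g)$ via uniqueness of polar decomposition of $g^{-1}$ (namely $g^{-1}=\exp(Y')k'$ with $\exp Y'=(gg^{\top})^{-1/2}$-type formula) pins down $k=e$. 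Either way the lemma follows; I would present the version via the anti-involution $\sigma$ as it is the most transparent.
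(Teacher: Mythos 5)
Your reduction is to a false statement: it is not true that every $g\in G_0$ with $\theta(g)=g^{-1}$ lies in $\exp\sf_0$. For instance $-I\in\SL(2,\R)$ satisfies $\theta(-I)=(-I)^{-1}$ but is not the exponential of any element of $\sf_0$ (it is not positive definite). Your own algebra detects exactly this obstruction: writing $g=pk$ and using $\theta(g)=g^{-1}$ you correctly arrive at $k^2=e$ and $k$ commuting with $p$, but at that point the argument simply asserts that ``injectivity of squaring on the positive part forces the $K_0$-part to be trivial,'' which does not follow --- for $g=-I$ one has $p=I$, $k=-I$, $g^2=I=p^2$, yet $k\neq e$. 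The ``alternative route'' at the end has the same defect: comparing $\exp(-Y)k$ with $k^{-1}\exp(-Y)=\exp(-\Ad(k^{-1})Y)\,k^{-1}$ and invoking uniqueness of the polar decomposition yields only $k=k^{-1}$ and $\Ad(k)Y=Y$, i.e.\ again $k^2=e$ rather than $k=e$. The set $S=\{g\in G_0:\theta(g)=g^{-1}\}$ genuinely has components other than $\exp\sf_0$, so no algebraic manipulation of the single element $\exp X\exp U\exp X$ can by itself rule out landing in one of them.

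The missing idea, which is how the paper closes exactly this gap, is topological: the whole path $t\mapsto\exp(tX)\exp(tU)\exp(tX)$, $t\in[0,1]$, lies in $S$ and starts at $e$. Since the $K_0$-component in the global diffeomorphism $G_0\simeq K_0\times\exp\sf_0$ depends continuously on $t$ and always satisfies $k_t^2=e$, and since $e$ is isolated in $\{k\in K_0:k^2=e\}$, one concludes $k_t=e$ for all $t$; equivalently, $\exp\sf_0$ is the identity component of $S$ and a path starting at $e$ cannot leave it. Your computation that every element of $S$ has involutive $K_0$-part is precisely the local input the paper uses, so your proof is repaired by adding this connectedness argument rather than by trying to force $k=e$ algebraically.
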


\begin{proof} Let $\theta$ denote the Cartan involution
and note that the product $\exp (tX)\exp (tU)\exp (tX)$ belongs to 
$S=\{ g\in G_0\mid \theta(g)=g^{-1}\}$ for all $t\in[0,1]$. 
It is easily seen that $k\exp Y\in S$ implies $k^2=e$
for $k\in K_0$ and $Y\in\sf_0$, and since $e$ is isolated
in the set of elements of order $2$ it follows that
$\exp\sf_0$ is the identity component of $S$. 
Hence $\exp X\exp U\exp X\in\exp\sf_0$.
\end{proof}

\begin{lemma}\label{two items} 
Let $\af_0\subset\sf_0$ be a one-dimensional 
subspace and let $A_0= \exp\af_0$.
\begin{enumerate}
\item If $g\in \exp\sf_0$ and $ga_0\in a_0'K_0$ for 
some $a_0,a_0'\in A_0$, then $g=a_0'a_0^{-1}$.
\item
If $g\in G_0$ and $ga_1,ga_2\in A_0K_0$
for some $a_1,a_2\in A_0$ with $a_1\neq a_2$ then
$g\in N_{K_0}(\af_0)A_0$.
\end{enumerate}
\end{lemma}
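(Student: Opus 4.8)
The plan is to prove both items by reducing to rank-one geometry in the symmetric space $Z_0' = \exp\sf_0 \cdot z_0$ associated to the factor $G_0$, using that a one-dimensional $\af_0$ gives a single geodesic $\gamma_0 = A_0 z_0$ through the origin $z_0$.

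First I would prove item (1). Suppose $g \in \exp\sf_0$ and $g a_0 \in a_0' K_0$, i.e.\ $g(a_0 z_0) = a_0' z_0$ as points of $Z_0$. Both $a_0 z_0$ and $a_0' z_0$ lie on the geodesic $\gamma_0$, so $g$ carries a point of $\gamma_0$ to another point of $\gamma_0$. The key observation is that $a_0' a_0^{-1} \in \exp\sf_0$ also sends $a_0 z_0$ to $a_0' z_0$, so the isometry $(a_0' a_0^{-1})^{-1} g$ fixes the point $a_0 z_0$; hence it lies in the isotropy subgroup of that point, which is a conjugate of $K_0$ and in particular compact. On the other hand $(a_0' a_0^{-1})^{-1} g$ is a product of two elements of $\exp\sf_0$; writing $U = \log a_0 - \log a_0'$ (using that $\log a_0, \log a_0' \in \af_0$ commute, so $a_0'^{-1} a_0 = \exp U$ with $U \in \af_0 \subset \sf_0$), we have $(a_0' a_0^{-1})^{-1} g = \exp U \cdot \exp X$ for the $X \in \sf_0$ with $g = \exp X$. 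By Lemma~\ref{XUX} (applied with the middle factor trivial, or directly by the $S$-set argument there), a product $\exp U \exp X$ that also lies in $K_0$ — an element of finite order, being an elliptic isometry fixing $a_0 z_0$ — forces $\exp U \exp X = e$, hence $g = \exp(-U) = a_0' a_0^{-1}$. Actually the cleanest route: since $g \in \exp\sf_0$ fixes no issue, note $g$ and $a_0' a_0^{-1}$ are both positive-definite (in $\exp\sf_0$) and agree on the point $a_0 z_0$; because the geodesic symmetry structure of a Riemannian symmetric space determines a transvection along $\gamma_0$ uniquely by its action on one point of $\gamma_0$ together with the tangent direction, and $g$ maps $\gamma_0$ into itself, $g$ must be the transvection $a_0' a_0^{-1}$.

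Next I would prove item (2). Let $g \in G_0$ with $g a_1, g a_2 \in A_0 K_0$, i.e.\ $g(a_1 z_0), g(a_2 z_0) \in \gamma_0$, for distinct $a_1 \ne a_2$. So $g$ sends the two distinct points $p_1 = a_1 z_0$, $p_2 = a_2 z_0$ of $\gamma_0$ into $\gamma_0$. Two distinct points of $\gamma_0$ determine $\gamma_0$ as the unique geodesic through them, so $g(\gamma_0) = \gamma_0$ as a set; thus $g$ normalizes $\gamma_0$. The stabilizer in $G_0$ of the geodesic $\gamma_0$ (as a set) is generated by the transvections along $\gamma_0$, namely $A_0$, together with the subgroup of $K_0$ fixing $z_0$ and stabilizing $\gamma_0$, which either fixes $\gamma_0$ pointwise or reverses it; this latter subgroup is exactly $N_{K_0}(\af_0)$ (elements of $K_0$ normalizing $\af_0$ act on it by $\pm 1$, corresponding to fixing or flipping the geodesic). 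Hence the setwise stabilizer of $\gamma_0$ is $N_{K_0}(\af_0) A_0$, and therefore $g \in N_{K_0}(\af_0) A_0$.

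The main obstacle is making precise the claim that the setwise stabilizer of the geodesic $\gamma_0 = A_0 z_0$ equals $N_{K_0}(\af_0) A_0$, since a priori $G_0$ may fail to act effectively (there could be a kernel, e.g.\ a central subgroup contained in $K_0$) and one must be careful that ``stabilizing a geodesic'' in the isometry group of $Z_0$ lifts correctly to $G_0$. I would handle this by passing to the adjoint group where $G_0$ acts effectively on $Z_0$, arguing there, and then pulling back — the central kernel lies in $K_0$ and in $N_{K_0}(\af_0)$, so it is harmless; alternatively one can argue purely group-theoretically: if $g$ normalizes $\gamma_0$ then after multiplying by a suitable element of $A_0$ we may assume $g$ fixes $z_0$, so $g \in K_0$, and then $\Ad(g)$ preserves the line $\af_0 = T_{z_0}\gamma_0 \subset \sf_0$, i.e.\ $g \in N_{K_0}(\af_0)$, giving $g \in N_{K_0}(\af_0) A_0$ as desired. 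For item (1) the subtlety is the rigidity statement that a positive element of $\exp\sf_0$ preserving $\gamma_0$ and sending one given point to another given point is the unique transvection doing so; this follows from the fact that $\exp\colon \sf_0 \to \exp\sf_0$ together with evaluation at $z_0$ is a diffeomorphism onto $Z_0$, combined with Lemma~\ref{XUX} to control the product $\exp U \exp X$ when it lands in $K_0$.
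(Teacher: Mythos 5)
Your part (2) is correct and is essentially the paper's argument: the two distinct points $a_1z_0\neq a_2z_0$ determine the geodesic $A_0z_0$, so $g$ maps it onto itself; writing $g=ak$ with $a\in A_0$ and $k\in K_0$ fixing $z_0$ and stabilizing the geodesic, one gets $\Ad(k)\af_0=\af_0$ (the paper instead deduces $kA_0k^{-1}\subset A_0$ from uniqueness of the Cartan decomposition, which amounts to the same thing), and your precaution about possible non-effectivity of the action is reasonable though not essential.

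Part (1), however, has a genuine gap. The element $p:=(a_0'a_0^{-1})^{-1}g=\exp U\exp X$ fixes the point $a_0z_0$ and therefore lies in the conjugate $a_0K_0a_0^{-1}$ of $K_0$, \emph{not} in $K_0$ itself; so uniqueness of the Cartan decomposition $G_0=\exp\sf_0\,K_0$ at the origin does not apply to it as you claim, and the assertion that $p$ has finite order is false (elements of a compact group need not be torsion). Your ``cleanest route'' is also not available: in item (1) you only know where the single point $a_0z_0$ goes, so you may not assume that $g$ preserves the geodesic $\gamma_0$, nor that $g$ is a transvection along it --- that is the conclusion, not a hypothesis. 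What your approach actually requires is that the evaluation map $\exp\sf_0\ni p\mapsto p\cdot z$ be injective at the point $z=a_0z_0$ (equivalently, $\exp\sf_0\cap a_0K_0a_0^{-1}=\{e\}$), and this is never established; the diffeomorphism $\exp\sf_0\to Z_0$ you invoke only covers evaluation at $z_0$. The repair is exactly the paper's one-line trick: multiply the hypothesis on the left by $a_0$ to get $a_0ga_0\in a_0a_0'K_0$; by Lemma \ref{XUX} (with $a_0=\exp Y$ as the two outer factors and $g$ in the middle) one has $a_0ga_0\in\exp\sf_0$, and since $a_0a_0'\in\exp\sf_0$ as well, uniqueness of the Cartan decomposition at the origin now does apply and gives $a_0ga_0=a_0a_0'$, hence $g=a_0'a_0^{-1}$. (Alternatively one can prove $\exp\sf_0\cap hK_0h^{-1}=\{e\}$ for every $h$, e.g.\ by observing that a nontrivial $\exp Z$ with $Z\in\sf_0$ has unbounded powers and so cannot lie in a compact subgroup; but some such argument must be supplied.)
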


\begin{proof} (1) It follows from
$ga_0\in a_0'K_0$ that $a_0ga_0\in a_0a_0'K_0$. 
Since $a_0ga_0\in \exp\sf_0$ by Lemma \ref{XUX}, 
it follows from uniqueness of the Cartan decomposition that 
$a_0ga_0=a_0a_0'$ and thus $g=a_0'a_0^{-1}$.

(2) Put $z_0=eK_0$, then $A_0. z_0$ is a geodesic in $G_0/K_0$.
Since $g$ maps two distinct points on  
$A_0. z_0$ into $A_0. z_0$, it maps the entire
geodesic onto itself, and hence so does $g^{-1}$. In particular 
$g^{-1}.z_0\in A_0K_0$, that is, $g=k_0a_0$
for some $k_0\in K_0$, $a_0\in A_0$. It follows for all $a\in A_0$ that
$$k_0ak_0^{-1}=ga_0^{-1} ak_0^{-1}\in gA_0K_0=A_0K_0.$$ 
As $k_0ak_0^{-1}\in\exp\sf_0$, 
uniqueness of the Cartan decomposition
implies  $k_0ak_0^{-1}\in A_0$,
i.e. $k_0\in N_{K_0}(\af_0)$.
\end{proof}

\begin{lemma}\label{third item}
Let $\af_1,\af_3\subset\sf_0$ be one-dimensional subspaces
with  $\af_1\perp\af_3$ 
and let $A_1= \exp\af_1$, $A_3= \exp\af_3$.
If $g\in N_{K_0}(\af_1)A_1$ and 
$ga_3\in a_3'K_0$ for some $a_3,a_3'\in A_3$,
not both equal to $e$, then $g\in N_{K_0}(\af_1)
\cap N_{K_0}(\af_3)$.
\end{lemma}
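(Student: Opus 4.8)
The plan is to reduce the statement to the concrete realization of $G_0 = \SO_e(n,1)$ (or its local covers) acting on the hyperbolic space $Z_0 = G_0/K_0 = \Hb^n$, and to argue geometrically. Write $g = k_0 a_1$ with $k_0 \in N_{K_0}(\af_1)$ and $a_1 \in A_1$. The geodesic $A_3 . z_0$ through the origin $z_0$ is orthogonal to the geodesic $A_1 . z_0$ since $\af_1 \perp \af_3$. The hypothesis $g a_3 \in a_3' K_0$ says that $g$ carries the point $a_3 . z_0$ on the geodesic $A_3.z_0$ to the point $a_3'.z_0$, which again lies on $A_3.z_0$. At least one of $a_3.z_0$, $a_3'.z_0$ is different from $z_0$, by the assumption that $a_3, a_3'$ are not both $e$.

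First I would pin down the fixed-point set of $g$, or rather use $g$-invariance of the geodesic $A_1.z_0$. Since $k_0 \in N_{K_0}(\af_1)$ and $a_1 \in A_1$, the isometry $g$ maps $A_1.z_0$ onto itself: indeed $A_1.z_0$ is $a_1$-invariant (it is $A_1$ acting on its own orbit) and $N_{K_0}(\af_1)$ preserves $A_1.z_0$. So $g$ preserves $A_1.z_0$. Now consider the point $p = a_3.z_0$ (assume $a_3 \ne e$; the case $a_3' \ne e$ is symmetric, applying the argument to $g^{-1}$, which lies in the same group since $N_{K_0}(\af_1)A_1$ is a group). The distance from $p$ to the geodesic $A_1.z_0$ equals the distance from the origin $z_0$ to $p$ — because $A_3.z_0$ meets $A_1.z_0$ orthogonally at $z_0$, so the foot of the perpendicular from $p$ to $A_1.z_0$ is $z_0$ itself. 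Since $g$ is an isometry preserving $A_1.z_0$, it must send the foot of the perpendicular from $p$ to the foot of the perpendicular from $gp = a_3'.z_0$; that is, $g$ sends $z_0 = \operatorname{foot}(p)$ to $\operatorname{foot}(a_3'.z_0)$. But $\operatorname{foot}(a_3'.z_0) = z_0$ as well (same orthogonality argument, using $a_3' \in A_3$). Hence $g z_0 = z_0$, i.e. $g \in K_0$, forcing $a_1 = e$ and $g = k_0 \in N_{K_0}(\af_1)$.

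It remains to show $g = k_0 \in N_{K_0}(\af_3)$ as well. Now $g \in K_0$ fixes $z_0$ and, by hypothesis, carries $a_3.z_0$ to $a_3'.z_0$, two points on the geodesic $A_3.z_0$, with at least one of them distinct from $z_0$. An isometry of $\Hb^n$ fixing $z_0$ and mapping one nonorigin point of a geodesic through $z_0$ to another point of the same geodesic must map that geodesic onto itself (a geodesic through $z_0$ is determined by $z_0$ and any other of its points, or by $z_0$ and its tangent direction, and $g$ being a fixed-origin isometry sends the geodesic $A_3.z_0$ to the geodesic through $z_0$ in the direction $d g|_{z_0}$ applied to the tangent of $A_3.z_0$; since the image geodesic must pass through the nonorigin point $a_3'.z_0 \in A_3.z_0$, it coincides with $A_3.z_0$). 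Therefore $g$ normalizes the one-parameter group $A_3$, i.e. $\Ad(k_0)$ preserves $\af_3$ (up to sign), so $k_0 \in N_{K_0}(\af_3)$. Combined with $k_0 \in N_{K_0}(\af_1)$ from the previous step, this gives $g \in N_{K_0}(\af_1) \cap N_{K_0}(\af_3)$, as desired.

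The main obstacle I anticipate is the bookkeeping around the degenerate cases and the reduction from $G_0$ to $\SO_e(n,1)$: one must check that the statement is insensitive to passing to a finite cover (the centers lie in $K_0$, so $N_{K_0}(\af_j)A_j$ and its intersections behave well), and one must handle cleanly the two sub-cases $a_3 \ne e$ versus $a_3' \ne e$ by the symmetry $g \leftrightarrow g^{-1}$, noting $N_{K_0}(\af_1)A_1$ is indeed a subgroup (because $\Ad(N_{K_0}(\af_1))$ preserves $\af_1$). A secondary technical point is justifying the "foot of the perpendicular" argument rigorously — this is where orthogonality $\af_1 \perp \af_3$ enters, via the fact that in a Riemannian symmetric space the totally geodesic submanifold $A_1.z_0$ through $z_0$ has the property that the nearest-point projection of $\exp(tX_3).z_0$ (with $X_3 \perp \af_1$) is $z_0$; this follows from the first variation formula or from the explicit model of $\Hb^n$. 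Once these are in place, the rest is the short geometric argument sketched above.
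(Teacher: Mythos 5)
Your proof is correct and follows essentially the same route as the paper: the paper phrases the key step as the impossibility of a proper geodesic triangle with two right angles in the nonpositively curved space $G_0/K_0$ (vertices $z_0$, $g.z_0$, $a_3'.z_0$, with right angles where $A_3.z_0$ and $gA_3.z_0$ meet the $g$-invariant geodesic $A_1.z_0$), which is exactly your observation that $z_0$ and $g.z_0$ are both feet of the perpendicular from $a_3'.z_0$ onto $A_1.z_0$ and hence coincide. The only cosmetic difference is that you specialize to $\Hb^n$ while the paper states the lemma for an arbitrary reductive $G_0$; your nearest-point-projection argument works verbatim in that generality, since $G_0/K_0$ is a Hadamard manifold.
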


\begin{proof}
We may assume $a_3'\neq e$, as otherwise we interchange 
it with $a_3$ and replace $g$ by $g^{-1}$. We
consider the geodesic triangle in $G_0/K_0$ formed by 
the geodesics
$$
L_1:=A_1.z_0 ,\quad
L_2:=A_3.z_0, \quad
L_3:=gA_3.z_0.
$$
The vertices are
$$D_3:=z_0,\quad D_2:=g.z_0,\quad D_1:=ga_3.z_0=a_3'.z_0.$$
As $L_1$ and $L_2$ intersect orthogonally, angle $D_3$ is right.
The isometry $g$ maps $L_1$ to itself and
$L_2$ to $L_3$. Hence $L_1$ and $L_3$ also intersect
orthogonally and angle $D_2$ is right. 
As the sectional curvature
of $G_0/K_0$ is $\le 0$, it is impossible 
for a proper triangle to have two right angles. 
As $L_1\neq L_2$ and $D_3\neq D_1$ we conclude
$D_3=D_2$ and $L_3=L_2$. It follows that $g\in K_0$ and by
Lemma \ref{two items} (2) that $g\in N_{K_0}(\af_3)$.
\end{proof}

\begin{proof}[Proof of Theorem \ref{unique}]
Assume $KaH=Ka'H$. Then $Kah=Ka'$ for some $h=(g,g,g)\in H$.
Applying Lemma \ref{two items} (2) to the first two coordinates
of $Kah=Ka'$ we conclude that $g\in N_{K_0}(\af_1)A_1$.

If $a_3'$ and $a_3$ are not both $e$, we can 
apply Lemma \ref{third item} to the last coordinate
and conclude $g\in N_{K_0}(\af_1)\cap N_{K_0}(\af_3)$.
Hence $h\in N_{K\cap H}(\af)$, and we conclude
that $a'=h^{-1}ah$.

If $a_3'=a_3=e$ it follows from the third
coordinate that $g\in K_0$. Hence $g\in N_{K_0}(\af_1)$
and $a'=a$ or $a'=a^{-1}$.
\end{proof}

\begin{rmk}
When $\dim\sf_0=2$ the assumption in 
Theorem \ref{unique} and Lemmas \ref{normalizer}, \ref{third item},
that $\af_1=\af_2\perp\af_3$, can be relaxed to 
$\af_1=\af_2\neq\af_3$ with unchanged conclusions.
This follows from the fact that in a two dimensional space
the only proper orthogonal transformations which
normalize a one-dimensional subspace are $\pm I$.
Hence $N_{K_0}(\af_1)=N_{K_0}(\af_3)$ in this case.
\end{rmk}

\section{A formula for the invariant measure}

In a situation where there is uniqueness (up to some well-described
isomorphism), it is of interest to explicitly determine the invariant 
measure with respect to the $KAH$-coordinates. 

For any triple space $G/H$ of 
a unimodular Lie group $G_0$ we note that the map
\begin{equation}\label{GtimesG}
G_0\times G_0\to G/H, \ \ (g_1, g_2)\mapsto (g_1, g_2, e) H
\end{equation}
is a $G_0\times G_0$-equivariant diffeomorphism. Accordingly the
invariant measure on $G/H$ identifies with the Haar measure on
$G_0\times G_0$.

For $G_0=\SO_e(n,1)$ we define  
$X\in\so(n,1)$ by (\ref{Xmatrix}) with $A=0$ and
$b=e_n$, and $Y\in\so(n,1)$ 
similarly with $A=0$ and $b=e_1$. 
Let $\af_1=\af_2=\R X$ and $\af_3=\R Y$, 
then $\af_3\perp \af_1$. Let
$$a_t=\exp(tX)\in A_1=A_2,\quad b_s=\exp(sY)\in A_3.$$

\begin{lemma} 
Let $G/H$ be the triple space of $G_0=\SO_e(n,1)$ and let 
$\af_1=\af_2$ and $\af_3$ be as above.
Consider the polar coordinates
\begin{equation}\label{coordinates}
K\times\R^3\ni
(k,t_1,t_2,s)\mapsto (k_1a_{t_1},k_2a_{t_2},k_3b_s)H
\end{equation}
on $G/H$. The invariant measure $dz$ of $G/H$ can be 
normalized so that in these coordinates
\begin{equation}\label{measure}
dz =  J(t_1, t_2, s)\, dk\, dt_1\, dt_2\, ds
\end{equation}
where $dk$ is Haar measure, $dt_1, dt_2, ds$ 
Lebesgue measure, and where 
$$J(t_1,t_2,s)=|\sinh^{n-1}(t_1-t_2)\sinh^{n-2}(s)\cosh(s)|.$$
\end{lemma}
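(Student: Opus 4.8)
The plan is to compute the Jacobian of the coordinate map~(\ref{coordinates}) by factoring it through the diffeomorphism~(\ref{GtimesG}) and reducing the computation to Haar measure on $G_0\times G_0$. Concretely, write a generic point of $G/H$ in the coordinates $(k,t_1,t_2,s)$ as $(k_1a_{t_1},k_2a_{t_2},k_3b_s)H$. Using the identification~(\ref{GtimesG}) we may right-translate the third coordinate to $e$, so this point corresponds to the pair
\begin{equation*}
(k_1a_{t_1}b_s^{-1}k_3^{-1},\ k_2a_{t_2}b_s^{-1}k_3^{-1})\in G_0\times G_0.
\end{equation*}
Thus, up to the left $G_0\times G_0$-action (which preserves Haar measure), the coordinate map is governed by the single-group map
\begin{equation*}
\Phi:K_0\times\R^2\to G_0,\qquad (k,t,s)\mapsto k\,a_t\,b_s,
\end{equation*}
composed twice; more precisely $dz$ pulls back to $J_1(t_1,s)\,J_2(t_2,s)\,dk\,dt_1\,dt_2\,ds$ only if the two $b_s$'s were independent, which they are not, so I must be a little more careful: the right variable $s$ is shared. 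The cleanest route is to treat the coordinates as first choosing $(k_3,b_s)$ and then, relative to that, choosing $k_1a_{t_1}$ and $k_2a_{t_2}$ in two independent copies of the $KA$-type map into $G_0$; a change of variables $g_i\mapsto g_i k_3 b_s$ shows the $k_3,b_s$ integration is the Haar measure of the $P_3=M_3A_3N_3$-type slice times the $s$-Jacobian, while each $g_i$-integration contributes a standard $KAK$-type Jacobian.

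The key computation therefore splits into two standard pieces. First, for a single factor, the map $K_0\times\R\ni(k,t)\mapsto k a_t \cdot(\text{fixed }b_s^{-1}k_3^{-1})$ is a right translate of $(k,t)\mapsto ka_t$; pushing forward and integrating out the $K_0/M_0$-directions of $k$ that are killed by $a_t$ on the right is exactly the Cartan $KAK$ integration formula on $G_0=\SO_e(n,1)$, whose radial density on the $\af_1$-direction is $|\sinh^{n-1}(t)|$ up to normalization — but since both factors carry the \emph{same} geodesic direction $\af_1=\af_2=\R X$ and are then transported by a common element, what survives after the $H$-quotient is the density in the \emph{difference} $t_1-t_2$, giving $|\sinh^{n-1}(t_1-t_2)|$. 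Second, the $b_s$-direction: here $\af_3=\R Y$ is a maximal split direction in a rank-one $\so(n,1)$, and the relevant Jacobian is the one for the $K_0 A_3 N_3$-type or $N_{K_0}(\af_3)\times A_3$-slice, which for $\SO_e(n,1)$ has density $|\sinh^{n-2}(s)\cosh(s)|$ — this is the familiar density $|\sinh^{n-2}(s)\cosh(s)|$ coming from the root multiplicities $m_\alpha=n-1$, $m_{2\alpha}=0$ (so $\sinh^{n-1}$) split as $\sinh^{n-2}\cdot(\sinh\cdot\tfrac{\cosh}{\cosh})$... more honestly, it arises from parametrizing the horosphere through a geodesic sphere of radius $s$, whose area element in $Z_0$ is $\sinh^{n-1}(s)$ and whose relation to the $(k_3,s)$-coordinates introduces one further $\cosh(s)$ factor from the derivative of the geodesic flow transverse to $\af_1$. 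The product of the three densities is $J(t_1,t_2,s)$.

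The main obstacle is bookkeeping: making the factorization of the coordinate chart rigorous so that the three Jacobian contributions genuinely multiply without cross terms. The danger is that $s$ appears in both the ``$g_1$'' and ``$g_2$'' translates, so a naive product of two $KAK$-formulas would double-count the $s$-dependence; one must organize the integration as $\int_{K_0}\int_{\R}\big(\int\cdots dg_1\big)\big(\int\cdots dg_2\big)$ with the inner integrals being genuine Cartan integrals in $G_0$ (hence $s$-independent after the change of variables $g_i\mapsto g_i b_s k_3$), leaving a clean outer $(k_3,s)$-integral. Once that structure is fixed, the remaining work is to invoke the known $\SO_e(n,1)$ integration formulas — or equivalently to compute the derivative of $(k,t,s)\mapsto ka_tb_s$ at a point and take the modulus of its determinant against the chosen Haar measures, a routine $\so(n,1)$ linear-algebra calculation using the matrix form~(\ref{Xmatrix}) and the bracket relations among $X$, $Y$ and $\kf_0$. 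The normalization of $dz$ is then chosen to absorb the constant, and one checks that the formula is consistent with Theorem~\ref{unique} and Lemma~\ref{normalizer} in that $J$ is invariant under $(t_1,t_2,s)\mapsto(-t_1,-t_2,-s)$ and, for $n>2$, separately under $s\mapsto -s$.
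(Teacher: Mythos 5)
Your first step (transporting the problem to Haar measure on $G_0\times G_0$ via (\ref{GtimesG}) and writing the point as $(k_1a_{t_1}b_s^{-1}k_3^{-1},\,k_2a_{t_2}b_s^{-1}k_3^{-1})$) matches the paper, and you correctly identify the central difficulty, namely that $s$ and $k_3$ are shared between the two factors. But your plan for resolving it does not work as stated. The ``inner integrals'' you propose are over $(k_i,t_i)\mapsto k_ia_{t_i}$, whose image $K_0A_1$ is a submanifold of $G_0$ of codimension $n-1$; there is no second $K_0$-factor on the right of $a_{t_i}$, so these are not ``genuine Cartan integrals in $G_0$'' and no $KAK$-type Jacobian $|\sinh^{n-1}(t_i)|$ is available for each factor separately. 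The parametrization of $G_0\times G_0$ only closes up because the \emph{common} right factor $b_s^{-1}k_3^{-1}$ supplies the missing directions for both copies at once, and your appearance of the difference $t_1-t_2$ is asserted (``what survives after the $H$-quotient is the density in the difference'') rather than derived. Likewise your justification of the factor $|\sinh^{n-2}(s)\cosh(s)|$ is not correct: the root-multiplicity splitting $\sinh^{n-1}=\sinh^{n-2}\cdot(\sinh\cdot\cosh/\cosh)$ does not produce $\sinh^{n-2}\cosh$, and the horosphere/geodesic-sphere heuristic is not pinned down to a formula you can cite.

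The missing idea is a specific two-stage factorization through \emph{symmetric-space} integration formulas. First decompose $G_0\times G_0$ by the $KAH$ formula for the double space $(G_0\times G_0)/\diag(G_0)\cong G_0$, i.e.\ $(k_1,k_2,a_t,g)\mapsto(k_1a_{t/2}g,\,k_2a_{-t/2}g)$ with density $|\sinh^{n-1}(t)|$; this is where the single factor in the antidiagonal variable $t$ comes from, and the substitution $t_1=u+t/2$, $t_2=u-t/2$ is what turns $t$ into $t_1-t_2$. Then decompose the remaining diagonal copy of $G_0$ by the $HAK$ coordinates of the non-Riemannian symmetric space $G_0/(\SO(n-1)\times A_1)$, with $A_3$ playing the role of ``$A$''; it is this decomposition (via \cite{Sbook}, Thm.~8.1.1) that yields the density $|\sinh^{n-2}(s)\cosh(s)|$, and the residual $\SO(n-1)$-variable is absorbed into $k_1,k_2$ because $\SO(n-1)$ centralizes $A_1$. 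Without identifying these two symmetric spaces and their known radial densities, the ``routine linear-algebra calculation'' you defer to at the end is precisely the content of the lemma, so the proposal has a genuine gap.
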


\begin{proof}
On $G_0 \times G_0$ we use the formula (see \cite{Sbook}, Thm. 8.1.1)
for integration in $KAH$
coordinates for the symmetric space $G_0
\times G_0 /\diag(G_0)= G_0$. The map
$$(K_0\times K_0)\times A_0 \times G_0 \to G_0\times G_0$$
defined by
$$(k,a_t, g)\mapsto (k_1a_{t/2}g,k_2a_{-t/2}g) $$
is a parametrization (up to the sign of $t$), 
and the Haar measure on $G_0\times G_0$
writes as
\begin{equation}\label{inv1}
|\sinh^{n-1}(t)| \, dk_1 \, dk_2 \,  dt\,  dg \, .
\end{equation}

Further we decompose the diagonal copy of $G_0$ by means of the
$HAK$ coordinates for the symmetric space $G_0/(\SO(n-1)\times A_1)$,
where $\SO(n-1)$ is located in the upper left corner of $G_0$.
Note that the subgroup $A_3$ serves as the `$A$' 
in this decomposition. In the coordinates
$$ K_0\times A_3\times \SO(n-1)\times A_1 \to G_0, \ \ (k_3, b_s, m, a_u)\mapsto
a_u m b_s k_3$$
we obtain (again using \cite{Sbook}, Thm. 8.1.1),
\begin{equation}\label{inv2}
dg= |\sinh^{n-2}(s)\cosh(s)| \, dk_3 \, db_s \, dm\, du\, .
\end{equation}
Combining (\ref{inv1}) and (\ref{inv2}), 
we have the coordinates $$(k_1 a_{u+t/2} m b_s  k_3, k_2
a_{u-t/2} m b_s k_3)$$ on $G_0 \times G_0,$ with Jacobian
$|\sinh^{n-1}(t) \sinh^{n-2}(s)\cosh(s)|$. 
As the subgroup $\SO(n-1)$ centralizes $A_1$, the 
integration over $m$ is swallowed by the integrations over $k_1$ and $k_2$.
Changing coordinates $u$, $t$ to $t_1=u+t/2$ and $t_2=u-t/2$ 
we find $t=t_1-t_2$.

Finally we apply (\ref{GtimesG})
so that the above coordinates correspond to
$$(k_1,k_2,k_3)(a_{t_1},a_{t_2},b_{-s})\diag(G_0)$$
This proves (\ref{measure}).
\end{proof}

\section{Spherical decomposition}

A decomposition of $\gf$ of the form
\begin{equation}\label{sph dec}
\gf=\pf+\hf
\end{equation}
with $\pf$ a minimal parabolic subalgebra
is said to be a {\it spherical} decomposition.
If such a decomposition exists, then the homogeneous
space $G/H$ is said to be of {\it spherical type}
(see \cite{KSS}). 

Note that with $\gf_0=\so(n,1)$ we have
(see (\ref{dim}) and (\ref{1par}))
$$\dim\pf+\dim\hf-\dim\gf=\tfrac12(n^2-5n+6)\ge 0.$$
In particular spherical decompositions 
will be direct sums if $n=2,3$. 

It was observed in \cite{KSS} that the triple spaces
for the groups considered in (\ref{G0}) are of spherical type.
In the following we determine for each $n$ all the minimal
parabolic subalgebras $\pf$ for which (\ref{sph dec}) holds.

\begin{prop}\label{thm sph}
Let $G_0$ be one of the groups {\rm (\ref{G0})} and
let $\pf=\pf_1\times \pf_2\times \pf_3$ 
a minimal parabolic subalgebra. Then
$\gf=\pf+\hf$ holds if and only if 
$\pf_1$, $\pf_2$ and $\pf_3$ are distinct.

In particular, the triple space $G/H$ is of spherical type
for all groups $G_0$ in {\rm (\ref{G0})}.
\end{prop}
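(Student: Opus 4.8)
The plan is to establish Proposition \ref{thm sph} by translating the algebraic statement $\gf = \pf + \hf$ into a geometric condition about $P$-orbits on $G/H$, in the same spirit as the treatment of the polar decomposition. Recall that $\gf = \pf + \hf$ holds if and only if the tangent map of the orbit map $P \to G/H$, $p \mapsto pH$, is surjective at the identity coset, which (by homogeneity and dimension count, using that $\gf=\pf+\hf$ is equivalent to $\dim\pf+\dim\hf\ge\dim\gf$ together with transversality) is equivalent to the orbit $PH$ being open in $G$. As in the polar case, it suffices to treat $G_0=\SO_e(n,1)$ for $n\ge 2$, since $\SL(2,\R)$ and $\SL(2,\C)$ have centers inside $K\subset P$ and are locally isomorphic to $\SO_e(2,1)$ and $\SO_e(3,1)$. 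So the goal becomes: the orbit of $P=P_1\times P_2\times P_3$ through the origin is open in $G/H$ if and only if the three minimal parabolic subgroups $P_j\subset G_0$ are pairwise distinct.

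First I would set up the explicit description. A minimal parabolic subgroup of $G_0=\SO_e(n,1)$ is the stabilizer of an isotropic line in $\R^{n+1}$, equivalently the stabilizer of a point $\xi_j$ on the boundary sphere $\partial Z_0 \simeq S^{n-1}$ of the real hyperbolic space $Z_0=G_0/K_0$. Via the diffeomorphism (\ref{GtimesG}), $G/H \simeq G_0\times G_0$, and the action of $P=P_1\times P_2\times P_3$ corresponds to the action of $P_1\times P_2\times P_3$ on $G_0\times G_0$ given by $(p_1,p_2,p_3)\cdot(g_1,g_2) = (p_1 g_1 p_3^{-1}, p_2 g_2 p_3^{-1})$. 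Thus $PH$ is open in $G/H$ if and only if $P_1\times P_2\times P_3$ has an open orbit on $G_0\times G_0$ through $(e,e)$, i.e. if and only if $(P_1\times P_2)\backslash (G_0\times G_0)/P_3$ — or more precisely the double coset $P_1 e P_3 \times P_2 e P_3$ suitably interpreted — contains an interior point. Concretely, a neighborhood of $(e,e)$ lies in the orbit iff every pair $(g_1,g_2)$ near $(e,e)$ can be written $g_1 = p_1 p_3$, $g_2 = p_2 p_3$ with $p_j\in P_j$; eliminating $p_3 = p_1^{-1} g_1$ this says $g_2 g_1^{-1} \in P_2 g_1^{-1}$ near... — it is cleaner to phrase it as: $PH$ open $\iff$ $P_1 P_3 = G_0$ and $P_2 P_3 = G_0$ hold near $e$ in a transversal sense, which by the Bruhat/Iwasawa decomposition $G_0 = P_i w P_j$ (big cell) is controlled by whether the boundary points $\xi_i,\xi_j\in\partial Z_0$ are distinct. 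Precisely, $P_i P_j$ is open in $G_0$ (indeed $P_i P_j \supset$ the big Bruhat cell) if and only if $\xi_i\ne\xi_j$; and if $\xi_i=\xi_j$ then $P_i=P_j$ and $P_iP_j=P_j$ is not open.

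The key dimension bookkeeping I would record explicitly: for $\gf_0=\so(n,1)$ one has $\dim\gf_0 = \binom{n+1}{2} = \tfrac12 n(n+1)$ and $\dim\pf_0 = \dim\mf_0+\dim\af_0+\dim\nf_0 = \binom{n-1}{2} + 1 + (n-1) = \tfrac12(n^2-n+2)$, so that $\dim\pf+\dim\hf-\dim\gf = 3\dim\pf_0 + \dim\gf_0 - 3\dim\gf_0 = 3\dim\pf_0 - 2\dim\gf_0 = \tfrac12(n^2-5n+6)$, matching the formula stated before the proposition; these are equations (\ref{dim}) and (\ref{1par}) referred to there. Given this, for the forward direction I would argue: if $P_1=P_2$ (say), then projecting the orbit of $P$ to the first two coordinates of $G_0\times G_0$, the image is contained in the set of pairs $(g_1,g_2)$ with $g_1 g_2^{-1}\in P_1 P_2^{-1} = P_1$, a proper closed subset, so the orbit is not open — and similarly if $P_1=P_3$ or $P_2=P_3$, using that $P_i P_i = P_i \ne G_0$. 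For the converse, when $\xi_1,\xi_2,\xi_3$ are pairwise distinct, I would show the orbit map has surjective differential at $(e,e)$ by checking $\pf_1+\Ad(g_1)\pf_3 = \gf_0$ and $\pf_2+\Ad(g_2)\pf_3=\gf_0$ for generic $g_1,g_2$ — equivalently, that $\pf_i+\pf_j=\gf_0$ whenever $\xi_i\ne\xi_j$, which is the standard fact that two distinct minimal parabolics of a rank-one group span the whole Lie algebra (their intersection is $\mf_0\af_0$ of dimension $\dim\gf_0 - 2\dim\nf_0$, and $\dim\pf_i+\dim\pf_j-\dim(\pf_i\cap\pf_j)=\dim\gf_0$). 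Assembling these transversality statements across the three factors, using the product structure and the identification (\ref{GtimesG}), yields that $PH$ is open.

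The main obstacle I anticipate is the converse direction: pairwise distinctness of $\xi_1,\xi_2,\xi_3$ gives that each \emph{pair} $\pf_i+\pf_j=\gf_0$, but one must verify that this pairwise condition actually assembles into surjectivity of the \emph{single} map $\pf_1\times\pf_2\times\pf_3 \to \gf_0\times\gf_0$ describing the tangent space to $PH$ at the origin. The relevant map sends $(X_1,X_2,X_3)$ to $(X_1-X_3, X_2-X_3)$ read inside $\gf_0/\hf \cong \gf_0\oplus\gf_0$ (via the antidiagonal), so its image is $\{(X_1-X_3,X_2-X_3)\}$; surjectivity onto $\gf_0\oplus\gf_0$ amounts to: $\pf_1+\pf_3=\gf_0$ and $\pf_2+\pf_3=\gf_0$, with no compatibility constraint tying the two $X_3$'s together since $X_3$ ranges freely in $\pf_3$ and can be chosen differently — wait, it is the \emph{same} $X_3$ in both coordinates, so the precise condition is that $\pf_3 \to (\gf_0/\pf_1)\oplus(\gf_0/\pf_2)$, $X_3\mapsto(-X_3\bmod\pf_1,\,-X_3\bmod\pf_2)$, together with the images of $\pf_1$ in the first slot and $\pf_2$ in the second, spans; since $\pf_1$ surjects onto the first slot already and $\pf_2$ onto the second, the map is surjective regardless of $\pf_3$. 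Hence pairwise distinctness is not even needed for surjectivity at the base point in this naive count — so the genuine content, and the real subtlety, is the \emph{failure} direction (that $P_iP_j$ is \emph{not} open when $\xi_i=\xi_j$, which needs the global orbit argument above, not just an infinitesimal one) and correctly handling the case $n\ge 4$ where $\dim\pf+\dim\hf>\dim\gf$ so that openness is not automatic from a dimension match and one truly needs the transversality/Bruhat input. I would therefore organize the proof around the boundary-point picture, proving: (i) $P_iP_j = G_0$ iff $\xi_i\ne\xi_j$; (ii) $PH$ open $\iff$ $P_1P_3=P_2P_3=G_0$, via (\ref{GtimesG}); and then combine.
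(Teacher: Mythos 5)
Your overall strategy (reduce to $\SO_e(n,1)$, identify $G/H$ with $G_0\times G_0$, and test surjectivity of the tangent map of the $P$-orbit at the origin) is legitimate and equivalent to the algebraic statement, but the central step is miscomputed, and this is a genuine gap. The tangent map is $(X_1,X_2,X_3)\mapsto(X_1-X_3,X_2-X_3)$ from $\pf_1\times\pf_2\times\pf_3$ to $\gf_0\oplus\gf_0$, and its surjectivity is equivalent to surjectivity of the induced map $\pf_3\to(\gf_0/\pf_1)\oplus(\gf_0/\pf_2)$. At that point you assert that ``$\pf_1$ surjects onto the first slot already,'' but the first slot is $\gf_0/\pf_1$, in which the image of $\pf_1$ is \emph{zero}; so surjectivity is not ``automatic regardless of $\pf_3$.'' The correct criterion (the paper's Lemma \ref{3subspaces}) is
$\gf_0=\pf_1+(\pf_2\cap\pf_3)=\pf_2+(\pf_3\cap\pf_1)=\pf_3+(\pf_1\cap\pf_2)$,
which involves \emph{intersections} of pairs of parabolics and is strictly stronger than the pairwise conditions $\pf_i+\pf_j=\gf_0$ that your plan proposes to verify. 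Equivalently, surjectivity of $\pf_3\to(\gf_0/\pf_1)\oplus(\gf_0/\pf_2)$ forces $\dim(\pf_1\cap\pf_2\cap\pf_3)=\dim\pf_3-2(n-1)=\tfrac12(n^2-5n+6)$, and establishing this dimension for all configurations of three distinct parabolics (including the delicate case where the three boundary points $q_1,q_2,q_3$ are linearly dependent) is precisely the computational heart of the paper's proof, which your proposal omits because it has concluded the condition is vacuous.

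The same error propagates to your final organizing claim (ii), that $PH$ is open iff $P_1P_3=P_2P_3=G_0$: this misses the requirement $P_1\neq P_2$, contradicting both the proposition and your own (correct) observation earlier that $P_1=P_2$ forces the orbit into the proper closed set $\{(g_1,g_2):g_1g_2^{-1}\in P_1\}$. Your ``only if'' direction via that projection argument is fine, and the reduction to $\SO_e(n,1)$ matches the paper. To repair the proof you would need to replace the pairwise-spanning step by a proof of the three conditions above, e.g.\ by computing $\dim(\pf_i\cap\pf_j)$ and $\dim(\pf_1\cap\pf_2\cap\pf_3)$ explicitly as stabilizers of isotropic lines, which is what the paper does.
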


We prepare by the following lemma.

\begin{lemma} \label{3subspaces}
Let $U_1,U_2,U_3\subset V$ be subspaces of a vector space $V$.
Put 
$$U:=U_1\times U_2\times U_3\subset X:=V\times V\times V,$$
and
$Y:=\diag(V)\subset X.$
Then $X=U+Y$ if and only if 
\begin{equation}\label{three statements}
V=U_1+(U_2\cap U_3)=U_2+(U_3\cap U_1)=U_3+(U_1\cap U_2).
\end{equation}
\end{lemma}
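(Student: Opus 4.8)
The plan is to prove the abstract linear-algebra statement of Lemma \ref{3subspaces} directly, since it is purely about subspaces of a vector space and the symmetry of the three conditions in (\ref{three statements}) makes it natural to treat one of them, say $V=U_1+(U_2\cap U_3)$, and then invoke symmetry. First I would unwind the meaning of $X=U+Y$: an arbitrary element of $X$ is a triple $(v_1,v_2,v_3)$, and $X=U+Y$ says every such triple can be written as $(u_1,u_2,u_3)+(w,w,w)$ with $u_j\in U_j$ and $w\in V$. Equivalently, for all $v_1,v_2,v_3\in V$ there is $w\in V$ with $v_j-w\in U_j$ for each $j$. Replacing $v_j$ by $v_j+v_3$ if convenient, it is cleanest to observe that $X=U+Y$ holds iff for every pair $(x,y)\in V\times V$ there is $w\in V$ with $w\in U_1$-translate of some target\dots\ — more precisely, setting $v_3=0$ shows we need, for all $v_1,v_2$, some $w$ with $w\in -U_3$ (i.e. $w\in U_3$), $v_1-w\in U_1$, $v_2-w\in U_2$; and conversely this special case already implies the general one by translating all three coordinates by $v_3$ and using that $U$ and $Y$ are translation-compatible. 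So the real content is: $X=U+Y$ iff for all $v_1,v_2\in V$ there exists $w\in U_3$ with $v_1-w\in U_1$ and $v_2-w\in U_2$.

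Granting that reformulation, I would prove the equivalence with (\ref{three statements}). For the forward direction: given $v_1\in V$, apply the condition with the pair $(v_1,v_1)$ to get $w\in U_3$ with $v_1-w\in U_1$ and $v_1-w\in U_2$, hence $v_1-w\in U_2\cap U_3$ (since $w\in U_3\subseteq$\dots\ wait, I need $v_1-w\in U_3$ too) — better: $w\in U_3$ and we want to land $v_1$ in $U_1+(U_2\cap U_3)$, so write $v_1=(v_1-w)+w$ with $v_1-w\in U_1$; I still need $w\in U_2\cap U_3$. To arrange that, instead apply the condition to the pair $(v_1,0)$: get $w\in U_3$ with $v_1-w\in U_1$ and $-w\in U_2$, i.e. $w\in U_2\cap U_3$, giving $v_1=(v_1-w)+w\in U_1+(U_2\cap U_3)$. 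That proves $V=U_1+(U_2\cap U_3)$, and the other two equalities follow by permuting the roles of the three factors (the condition $X=U+Y$ is symmetric under simultaneously permuting the three coordinates of $X$ and $U$). For the converse: assume (\ref{three statements}) and let $v_1,v_2\in V$ be given; I want $w\in U_3$ with $v_1-w\in U_1$, $v_2-w\in U_2$. Writing $v_1-v_2\in V=U_3+(U_1\cap U_2)$, decompose $v_1-v_2=p+q$ with $p\in U_3$, $q\in U_1\cap U_2$; then further using $V=U_1+(U_2\cap U_3)$ decompose $v_2 = r + s$ with $r\in U_1$, $s\in U_2\cap U_3$ — hmm, this is getting into bookkeeping. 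The clean way: I expect to solve for $w$ by noting the three conditions amount to $w\equiv v_1\ (U_1)$, $w\equiv v_2\ (U_2)$, $w\equiv 0\ (U_3)$, a system of congruences; one solves it stepwise, each time using the corresponding sum decomposition in (\ref{three statements}) to correct one congruence without disturbing the constraint that the correction lies in the intersection of the other two.

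The step I expect to be the main obstacle is precisely this converse construction of $w$: one must produce a single $w$ satisfying three simultaneous affine conditions, and naively correcting one condition can spoil another, so the order of corrections and the use of the \emph{intersection} terms $U_2\cap U_3$, $U_3\cap U_1$, $U_1\cap U_2$ (rather than the individual $U_j$) is what makes it work — each correction step uses, say, $V=U_3+(U_1\cap U_2)$ so that the adjustment lies in $U_1\cap U_2$ and hence leaves the $U_1$- and $U_2$-congruences intact. I would carry it out as: (i) pick any $w_0\in U_3$ (the condition mod $U_3$ is just $w\in U_3$, trivially solvable); (ii) write $v_1-w_0\in V=U_1+(U_2\cap U_3)$ as $a+b$ with $a\in U_1$, $b\in U_2\cap U_3$, and set $w_1=w_0+b$, still in $U_3$, now with $v_1-w_1=a\in U_1$; (iii) write $v_2-w_1\in V=U_2+(U_3\cap U_1)$ as $c+d$ with $c\in U_2$, $d\in U_3\cap U_1$, and set $w=w_1+d$; then $w\in U_3$, $v_2-w=c\in U_2$, and $v_1-w=a-d\in U_1$ since $d\in U_1$. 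This $w$ works, completing the converse and hence the lemma. I would then remark that by translating all coordinates by $v_3$, the special case ($v_3=0$) treated above yields the general statement $X=U+Y$, closing the loop.
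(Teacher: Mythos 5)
Your proof is correct. Both directions are the same elementary verification the paper carries out: your forward direction (testing the decomposition on the pair $(v_1,0)$ after the harmless reduction to the slice $v_3=0$) is identical in substance to the paper's, which writes $(v,0,0)=(u_1,u_2,u_3)+\diag(w)$ and reads off $w=-u_2=-u_3\in U_2\cap U_3$. The only genuine difference is in the converse. The paper decomposes each coordinate $x_j=u_j+t_j$ according to the three sum decompositions in (\ref{three statements}) and then exhibits the single symmetric formula $x=(u_1-t_2-t_3,\,u_2-t_1-t_3,\,u_3-t_1-t_2)+\diag(t_1+t_2+t_3)$, whereas you build the diagonal element $w$ by two successive corrections, each chosen inside an intersection $U_i\cap U_j$ precisely so as not to disturb the congruences already arranged; both constructions check out. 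Your version has the small virtue of making visible that only two of the three equalities in (\ref{three statements}) are actually needed for the converse, which is exactly the content of the remark the paper places right after the lemma; the paper's formula has the virtue of being symmetric in the three indices and of producing the decomposition in one stroke. (The false start with the pair $(v_1,v_1)$ in your forward direction is harmless, since you replace it by the correct test pair $(v_1,0)$.)
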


\begin{proof} Assume first that $X=U+Y$ and let $v\in V$ be given.
Writing $$(v,0,0)=(u_1,u_2,u_3)+\diag(w)$$ we see that
$w=-u_2=-u_3\in U_2\cap U_3$, and hence $v=u_1+w\in U_1+(U_2\cap U_3)$.
The other two statements in (\ref{three statements}) follow similarly.

Conversely, we assume 
(\ref{three statements}) and let $x=(x_1,x_2,x_3)\in X$ be given.
We decompose $x_1$, $x_2$ and $x_3$ according to the three 
decompositions in
(\ref{three statements}), that is,
\begin{align*}
x_1&=u_1+t_1, \qquad u_1\in U_1,\,\, t_1\in U_2\cap U_3\\
x_2&=u_2+t_2, \qquad u_2\in U_2,\,\, t_2\in U_3\cap U_1\\
x_3&=u_3+t_3, \qquad u_3\in U_3,\,\, t_3\in U_1\cap U_2.
\end{align*} 
Then 
$$x=(u_1-t_2-t_3,u_2-t_1-t_3,u_3-t_1-t_2)+\diag(t_1+t_2+t_3)$$
is a decomposition of the desired form $U+Y$.
\end{proof}

\begin{rmk} In fact, it is easily seen that
any two of the decompositions of $V$ in
(\ref{three statements}) together imply the third.
\end{rmk}

\begin{proof}[Proof of Proposition \ref{thm sph}] It suffices to consider 
$G_0=\SO_0(n,1)$ because of the local isomorphisms.

If for example $\pf_1=\pf_2$ then $\pf_1+(\pf_2\cap\pf_3)=\pf_1$.
Hence $\pf_1+(\pf_2\cap\pf_3)$ is proper in $\gf_0$ and it follows from
Lemma \ref{3subspaces} that $\gf=\pf+\hf$
fails to hold. This implies one direction of the first statement.

For the other direction
it follows from Lemma \ref{3subspaces} 
that it suffices to prove
$$\gf_0=\pf_1+(\pf_2\cap\pf_3)$$
for all triples of
distinct parabolics in $\so(n,1)$.
We shall do this by proving
\begin{equation}\label{dims}
\dim\gf_0=\dim\pf_1+\dim(\pf_2\cap\pf_3)-\dim(\pf_1\cap\pf_2\cap\pf_3).
\end{equation}
We find 
\begin{equation}\label{dim}
\dim\gf_0=\dim\so(n,1)=\tfrac12(n^2+n),
\end{equation} 
and claim that
\begin{align}
\label{1par}&\dim\pf_1=\tfrac12(n^2-n+2)\\
\label{2par}&\dim(\pf_1\cap\pf_2)=\tfrac12(n^2-3n+4)\\
\label{3par}&\dim(\pf_1\cap\pf_2\cap\pf_3)=\tfrac12(n^2-5n+6).
\end{align}
The equations (\ref{dim})-(\ref{3par})
imply (\ref{dims}). 

The parabolic subalgebras $\pf$ of $\so(n,1)$ are the normalizers of 
the isotropic lines in $\R^{n+1}$, that is, the one-dimensional subspaces 
of the form
$L_q=\R(q,1)$ where $q\in\R^n$ with $\|q\|= 1$. 

Recall that all elements in $\so(n,1)$ have the form
(\ref{Xmatrix})
with $A\in\so(n)$ and $b\in\R^n$. 
It follows that $X\in\pf$ if and only if
\begin{equation}\label{condition}
Aq+b=(b\cdot q)q.
\end{equation}

Let us prove (\ref{1par}). Let $q_1$ be the unit vector
such that $\pf_1$ is the stabilizer of $L_{q_1}$,
and extend $q_1$ to a basis $q_1,\dots,q_n$ for $\R^n$.
For $b\in\R^n$ we let $x_1=(b\cdot q_1)q_1-b$ 
and we observe that $x_1\cdot q_1=0$. According to
(\ref{condition}) the matrix $X$ of (\ref{Xmatrix}) 
belongs to $\pf_1$ if and only if $Aq_1=x_1$. In order to satisfy
that we can define an $n\times n$ matrix $A$ by
\begin{equation}\label{1cases}
Aq_i\cdot q_j:= 
\begin{cases}
x_1\cdot q_j &\text{for } i=1\\
-x_1\cdot q_i &\text{for } j=1\\
a_{ij} &\text{for } i,j > 1
\end{cases}
\end{equation}
with arbitrary antisymmetric entries in the last line.
Then $A\in\so(n)$ and $Aq_1=x_1$.
The degree of freedom for each $b$ is
$$\dim\so(n-1)=\tfrac12(n-1)(n-2),$$
and hence $\dim\pf_1=n+\frac12(n-1)(n-2)=\frac12(n^2-n+2)$ as asserted.

Next we prove (\ref{2par}).
Let $q_1,q_2$
be the unit vectors such that
$\pf_i$ is the stabilizer of $L_{q_i}$.
By assumption $q_1\neq q_2$.
For the element $X$ of (\ref{Xmatrix}) to be in
$\pf_1\cap\pf_2$ we need that (\ref{condition}) 
is satisfied in both cases, that is,
\begin{equation}\label{2eq}
Aq_i=x_i,\quad (i=1,2).
\end{equation}
where $x_i=(b\cdot q_i)q_i-b$.
Now
$$x_2\cdot q_1+x_1\cdot q_2=(q_1\cdot q_2-1)(b\cdot(q_1+q_2)).$$ 
Note that $q_1\cdot q_2<1$ since $q_1\neq q_2$. As
$A\in\so(n)$ we conclude that
$$b\cdot(q_1+q_2)= 0$$
since otherwise (\ref{2eq}) would lead to contradiction.

Conversely, let $b\in\R^n$ be such that $b\cdot(q_1+q_2)=0$ 
and define $x_1,x_2$ by $x_i=(b\cdot q_i)q_i-b$. 
Then $x_i\cdot q_j=-x_j\cdot q_i$ for all pairs
$i,j\le 1,2$. 
We extend $q_1,q_2$ to a basis and
define an $n\times n$ matrix $A$ by
\begin{equation}
Aq_i\cdot q_j= 
\begin{cases}
x_i\cdot q_j &\text{for } i=1,2\\
-x_j\cdot q_i &\text{for } j=1,2\\
a_{ij} &\text{for } i,j>2
\end{cases}
\end{equation}
with arbitrary antisymmetric entries in the last line.
Then $A\in\so(n)$ and  (\ref{2eq}) holds. 
The degree of freedom for each $b$ is 
$$\dim\so(n-2)=\tfrac12(n-2)(n-3)$$
and hence 
$\dim(\pf_1\cap\pf_2)=n-1+\frac12(n-2)(n-3)=\frac12(n^2-3n+4)$ as asserted.

Finally, to prove (\ref{3par}) assume that $X$ in (\ref{Xmatrix}) belongs to
$\pf_1\cap\pf_2\cap\pf_3$. As above, it follows that 
$$b\cdot(q_1+q_2)=b\cdot(q_1+q_3)=b\cdot(q_2+q_3)=0$$
which implies that $b\cdot q_i=0$ for $i=1,2,3$.
If this is satisfied by $b$, the condition (\ref{condition})
simplifies to 
\begin{equation}\label{3eq}
Aq_i=-b,\qquad i=1,2,3.
\end{equation}
We first assume that $q_1,q_2,q_3$ are linearly independent
and extend to a basis as before. Given $b\in\R^n$ such that $b\cdot q_i=0$ for
$i=1,2,3$ we define $A$ by
\begin{equation}
Aq_i\cdot q_j= 
\begin{cases}
-b\cdot q_j &\text{for } i=1,2,3\\
b\cdot q_i &\text{for } j=1,2,3\\
a_{ij} &\text{for } i,j>3
\end{cases}
\end{equation}
with arbitrary antisymmetric entries in the last line.
The degree of freedom for each $b$ is 
$$\dim\so(n-3)=\tfrac12(n-3)(n-4)$$
and hence 
$\dim(\pf_1\cap\pf_2\cap\pf_3)=n-3+\frac12(n-3)(n-4)=\frac12(n^2-5n+6)$ as asserted.

Next we assume linear dependence of $q_1,q_2,q_3$.
This implies a further obstruction to $b$. In fact, let 
$\lambda_1q_1+\lambda_2q_2+\lambda_3q_3=0$ be a non-trivial relation, then
it follows from (\ref{3eq}) that $(\lambda_1+\lambda_2+\lambda_3)b=0$.
Since $q_1,q_2,q_3$ are assumed to be distinct unit vectors
the sum of the $\lambda$'s cannot be zero, and we conclude that $b=0$.
Thus in this case the only freedom comes from the choice of $A$. 
That can be chosen
arbitrarily from the annihilator in $\so(n)$
of the space spanned by the three $q$'s.
We obtain  
$\dim(\pf_1\cap\pf_2\cap\pf_3)=\dim\so(n-2)=\frac12(n^2-5n+6)$
as before. This concludes the proof of (\ref{3par}).

In particular, if $\af_1$, $\af_2$ and $\af_3$ are all different,
then $\gf=\pf+\hf$ for every parabolic subalgebra $\pf$ above
$\af=\af_1\times\af_2\times\af_3$. Hence $G/H$ is of spherical type.
\end{proof}

\begin{cor}\label{strong spherical}
There exists a maximal abelian subspace $\af\subset\sf$ 
for which both 
\begin{enumerate}
\item[(i)]
the polar decomposition {\rm (\ref{polar dec})} is valid, and
\item[(ii)] the spherical decomposition {\rm (\ref{sph dec})} is 
valid for all minimal parabolic subalgebras containing $\af$.
\end{enumerate}
\end{cor}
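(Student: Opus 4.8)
The plan is to combine Theorem~\ref{thm polar} and Proposition~\ref{thm sph}, so the task reduces to exhibiting a single maximal abelian subspace $\af=\af_1\times\af_2\times\af_3\subset\sf$ that simultaneously satisfies the two dimension conditions appearing in those results. By Theorem~\ref{thm polar}, condition (i) holds exactly when $\dim(\af_1+\af_2+\af_3)=2$ in $\gf_0$. By Proposition~\ref{thm sph}, condition (ii) holds exactly when, for every minimal parabolic subalgebra $\pf\supset\af$, the three factors $\pf_1,\pf_2,\pf_3$ are distinct; since the minimal parabolics of $\so(n,1)$ above a given $\af_j$ correspond to the choices of positive system on $\af_j$ (equivalently, to the two isotropic lines fixed by the corresponding factor), it suffices to arrange that $\af_1,\af_2,\af_3$ are pairwise distinct as subspaces of $\sf_0$ --- then no matter which Weyl chamber is chosen in each factor, the resulting parabolics $\pf_1,\pf_2,\pf_3$ remain distinct.

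So the whole corollary comes down to the elementary linear-algebra fact that inside $\sf_0$ one can find three pairwise-distinct one-dimensional subspaces $\af_1,\af_2,\af_3$ whose span is two-dimensional. First I would note that for each group in (\ref{G0}) the space $\sf_0$ has dimension at least $2$: indeed $\dim\sf_0=n\ge 2$ for $\SO_e(n,1)$, and $\SL(2,\R)$, $\SL(2,\C)$ are locally isomorphic to $\SO_e(2,1)$, $\SO_e(3,1)$, so $\dim\sf_0\ge 2$ in every case. Then I would pick any two-dimensional subspace $V\subset\sf_0$ and choose three pairwise-distinct lines $\af_1,\af_2,\af_3$ through the origin of $V$ (this is possible since the projective line over $\R$ is infinite). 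For definiteness one could take, in the notation of the measure section, $\af_1=\R X$, $\af_2=\R Y$, and $\af_3=\R(X+Y)$ where $X,Y\in\sf_0$ are two of the coordinate generators; these three lines are clearly distinct and span the same two-dimensional space $\R X+\R Y$. Extending $\af$ to a maximal abelian subspace of $\sf$ is automatic, since any direct product of one-dimensional $\af_j\subset\sf_0$ is already maximal abelian in $\sf$ when $\sf_0$ has real rank one --- which is exactly the case for all groups in (\ref{G0}) --- and the form (\ref{three a's}) is then precisely the general maximal abelian subspace.

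With such an $\af$ in hand, Theorem~\ref{thm polar} gives $G=KAH$, establishing (i), while Proposition~\ref{thm sph} gives $\gf=\pf+\hf$ for every minimal parabolic $\pf=\pf_1\times\pf_2\times\pf_3$ with $\pf\supset\af$, because any such $\pf$ has $\pf_j$ a minimal parabolic of $\so(n,1)$ containing $\af_j$, and distinctness of the $\af_j$ forces distinctness of the $\pf_j$; this establishes (ii). I do not expect any genuine obstacle here: the only point requiring a moment's care is the remark just made, namely that distinct maximal split abelian lines $\af_j$ really do force distinct minimal parabolics, which is immediate in the rank-one setting since each $\af_j$ determines a geodesic in $Z_0$ and hence its two endpoints on the boundary sphere, the parabolic $\pf_j$ being the stabilizer of one of these two points; distinct $\af_j$ give disjoint pairs of boundary points, so no common choice can coincide. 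Hence the corollary follows.
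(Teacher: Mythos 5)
Your proposal is correct and follows essentially the same route as the paper: choose three pairwise distinct lines $\af_1,\af_2,\af_3\subset\sf_0$ with two-dimensional span and invoke Theorem~\ref{thm polar} for (i) and Proposition~\ref{thm sph} for (ii). The one point you elaborate on --- that distinct $\af_j$ through the basepoint force the parabolics $\pf_j\supset\af_j$ to be distinct, via the endpoints of the corresponding geodesics --- is exactly the observation the paper records in the closing paragraph of the proof of Proposition~\ref{thm sph}, so nothing is missing.
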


\begin{proof} Let $\af_j\subset\sf_0$ for $j=1,2,3$ be mutually different
and with a two-dimensional sum. It follows from 
Theorem \ref{thm polar} and Proposition \ref{thm sph} that
$\af=\af_1\times\af_2\times\af_3$ satisfies (i) and (ii).
\end{proof}

\begin{rmk} \label{surprise}
The properties of a reductive homogeneous space
$G/H$ that it is of polar type, respectively of spherical type,
appear to be closely related. However, the relation is not
as strong as one might hope, because the conditions on $\af$
are different in Theorem \ref{thm polar} 
and Proposition \ref{thm sph}. In particular, there exist
maximal abelian subspaces $\af\subset \gf$ which fulfill
(ii) but not (i),
namely the 'most generic' ones,
for which $\dim(\af_1+\af_2+\af_3)=3$.
\end{rmk}

\section{Infinitesimal polar decomposition}

Here we consider an infinitesimal version of the
polar decomposition $G=KAH$. Let $G/H$ be a homogeneous space of a reductive group
$G$, and let $\gf=\kf+\sf$ be a Cartan decomposition.

\begin{definition} 
A decomposition of the form
\begin{equation}\label{IP} 
\sf = \Ad (K\cap H) \af+ \sf\cap \hf 
\end{equation}
with an abelian subspace $\af\subset\sf$
is called a  polar decomposition.

If there exists such a decomposition of $\sf$ then 
we say that $G/H$ is infinitesimally polar.
\end{definition}

Here 
$$\Ad (K\cap H) \af=\{ \Ad(k)X \mid k\in K\cap H, X\in \af\}.$$
Note that this need not be a vector subspace of $\sf$. 

If $G/H$ is a symmetric space, then we can choose the Cartan decomposition 
so that $\kf$ and $\sf$ are stable under the involution $\sigma$ that determines
$G/H$. If $\gf=\hf+\qf$ denotes the decomposition of $\gf$ in $+1$ and $-1$
eigenspaces for $\sigma$, then $\sf=\sf\cap\qf+\sf\cap\hf$. If furthermore
$\af_q$ is a maximal abelian subspace of $\sf\cap\qf$, then it is known
that $\sf\cap\qf=\Ad (K\cap H) \af_q$ and hence (\ref{IP}) follows.

The following lemma suggests that there is a close
connection between polar decomposability
and infinitesimally polar decomposability.

\begin{lemma} 
Let $G_0$ be one of groups {\rm (\ref{G0})} and let
$\af=\af_1\times \af_2\times \af_3$. Then the infinitesimal
polar decomposition {\rm (\ref{IP})} holds if and only if 
$\dim(\af_1+\af_2+\af_3)=2$.
\end{lemma}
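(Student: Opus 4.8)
The plan is to reduce, as in the proof of Theorem~\ref{thm polar}, to the case $G_0=\SO_e(n,1)$, so that $\sf_0$ is identified with $\R^n$ and each $\af_j$ with a line $\ell_j$ through the origin. Under this identification $K_0=\SO(n)$ acts on $\sf_0=\R^n$ in the standard way, and $K\cap H=\diag(K_0)$ acts diagonally on $\sf=\R^n\times\R^n\times\R^n$. A vector $(X_1,X_2,X_3)\in\sf$ lies in $\sf\cap\hf$ precisely when $X_1=X_2=X_3$, so $\sf\cap\hf=\diag(\R^n)$. Thus (\ref{IP}) asserts that every triple $(X_1,X_2,X_3)$ can be written as $(\rho X_1,\rho X_2,\rho X_3)$ plus a diagonal vector, where $\rho\in\SO(n)$ is a single rotation and $X_j\in\ell_j$ — equivalently, for every triple $v_1,v_2,v_3\in\R^n$ there is $\rho\in\SO(n)$ with $\rho v_j\in\ell_j$ for $j=1,2,3$. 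This is exactly the linear analogue of Property~\ref{isometry}, with rotations in place of isometries.

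The implication $\dim(\ell_1+\ell_2+\ell_3)=2\Rightarrow(\ref{IP})$ can then be proved by the same continuity argument as in Proposition~\ref{Euclidean} and Theorem~\ref{thm polar}, but now using \emph{rotations fixing the origin} instead of rigid motions: by $\SO(n)$-transitivity on $2$-planes we may assume all lines and all $v_j$ lie in a common $\R^2$; we slide a pair of points $X_1(s)\in\ell_1$, $X_2(s)\in\ell_2$ at fixed mutual distance $\|v_1-v_2\|$ along the two lines (here the relevant fact is that an origin-centred circle of radius $\|v_1-v_2\|$ meets each line, giving us room to move), track the forced companion point $X_3(s)=\rho_s v_3$, and observe that at the two ends of the path $X_3(\pm1)$ are antipodal, so by the intermediate value theorem the curve $s\mapsto X_3(s)$ crosses $\ell_3$. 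A minor wrinkle is that an element of $\OO(2)$ so produced may have determinant $-1$; since we are now in $\SO(n)$ with $n\ge 2$ rather than $\SO(2)$, we have an extra reflection available in the ambient space to correct orientation, which is why the statement holds for all $n\ge2$ without the $n=2$ exception seen in Lemma~\ref{normalizer}.

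For the converse, suppose $\dim(\ell_1+\ell_2+\ell_3)\ge2$ is required; first rule out dimension $1$ exactly as in Lemma~\ref{not KAH} (or directly: if $\ell_1=\ell_2=\ell_3=\ell$, take $v_1\in\ell$, $v_2\notin\ell$ of the same norm as some vector of $\ell$; no rotation can put both into $\ell$ unless $\|v_2\|$ matches, and varying norms defeats it). To exclude $\dim(\ell_1+\ell_2+\ell_3)=3$, pick three distinct points $v_1,v_2,v_3$ on a common line $\ell$ in $\R^n$; if (\ref{IP}) held there would be $\rho\in\SO(n)$ with $\rho v_j\in\ell_j$, so the three collinear points $\rho v_1,\rho v_2,\rho v_3$ lie on $\ell_j$ respectively, forcing the affine line through them to meet each $\ell_j$; either one of the $\rho v_j$ is the origin, in which case two of the $\ell_j$ coincide with $\rho(\ell)$ and the sum has dimension $\le2$, or the line $\rho(\ell)$ together with the origin spans a $2$-plane containing all three lines, again giving dimension $\le2$ — contradicting $\dim=3$. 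I expect the main obstacle to be bookkeeping the orientation/determinant issue in the forward direction and making the "sliding" argument rigorous with origin-centred rotations (ensuring the circle of radius $\|v_1-v_2\|$ genuinely permits a continuous path connecting antipodal configurations on $\ell_1$); the reverse direction should be essentially immediate from the geometric dimension count already used in Theorem~\ref{thm polar}.
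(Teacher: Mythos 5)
Your reduction goes wrong at the word ``equivalently.'' The decomposition (\ref{IP}) reads $Z_j=\Ad(k_0)X_j+T$ for $j=1,2,3$, with a \emph{common} $k_0\in K_0$, $X_j\in\af_j$, and a common $T\in\sf_0$ coming from $\sf\cap\hf=\diag(\sf_0)$. Solving for $X_j$ gives $X_j=\Ad(k_0)^{-1}(Z_j-T)\in\ell_j$, i.e.\ the condition is that some \emph{rigid motion} (a rotation composed with a translation) carries the triple $(Z_1,Z_2,Z_3)$ into the three lines. By discarding the diagonal vector $T$ you replace this with the strictly stronger statement that some origin-fixing rotation $\rho$ satisfies $\rho v_j\in\ell_j$, and that statement is simply false even when $\dim(\ell_1+\ell_2+\ell_3)=2$: take $n=2$, $\ell_1$ and $\ell_2$ the two coordinate axes and $v_1=v_2=(1,0)$; then $\rho v_1\in\ell_1$ forces $\rho=\pm I$, and neither choice puts $v_2$ on $\ell_2$. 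The same omission breaks your forward argument internally: a rotation fixing the origin preserves $\|X_1(s)\|=\|v_1\|$, so $X_1(s)$ is confined to two antipodal points of $\ell_1$ and cannot ``slide'' along the line, nor can $X_2(\pm1)$ reach the origin unless $v_2=0$. The orientation discussion is a red herring; the missing ingredient is the translation, not a reflection.

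Once the translation is restored, the lemma needs no new continuity argument at all: the maps $X\mapsto\Ad(k)X+T$ are exactly the rigid motions of $\sf_0\simeq\R^n$, so (\ref{IP}) is word for word statement (2) of Proposition \ref{Euclidean}, and the equivalence with $\dim(\af_1+\af_2+\af_3)=2$ is that proposition. This is the paper's (two-line) proof. Your converse direction is essentially sound in structure --- collinear points stay collinear under a rigid motion, and the image line together with the origin spans at most a $2$-plane containing all three $\ell_j$ --- but it too must be phrased for rigid motions rather than rotations to apply to (\ref{IP}).
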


\begin{proof} For the triple spaces, the polar decomposition
(\ref{IP}) interprets to the statement that for every
triple of points $Z_1,Z_2,Z_3\in\sf_0$ there exist
$k\in K_0$, $T\in \sf_0$ and $X_j\in\af_j$ ($j=1,2,3$)  
such that $Z_j=\Ad(k)X_j+T$. As the maps
$X\mapsto \Ad(k)X+T$ with $k\in K_0$ and $T\in\sf_0$ are exactly
the rigid motions of $\sf_0$, this lemma is precisely the content of 
Proposition~\ref{Euclidean}. 
\end{proof}

Combining the lemma with Theorem \ref{thm polar} we see that
for our triple spaces
the infinitesimal polar decomposition holds with a  given $\af$ 
if and only if the global polar decomposition $G=KAH$ holds
for the corresponding $A=\exp\af$.

\end{document}